\tikzset{>=latex} 
\colorlet{myblue}{black!20!blue}
\colorlet{myred}{black!20!red}
\newtheorem{theorem}{Theorem}[]
\newtheorem{lemma}[theorem]{Lemma}
\newtheorem{proposition}[theorem]{Proposition}
\newtheorem{corollary}[theorem]{Corollary}
\newtheorem{mainthm}{\bf Theorem}
\theoremstyle{definition}
\newtheorem{definition}[theorem]{Definition}
\theoremstyle{remark}
\newtheorem{remark}[theorem]{Remark}
\newcommand{\dd}{\mathrm d}
\DeclareMathOperator{\vol}{Vol}
\DeclareMathOperator{\Ker}{Ker}
\DeclareMathOperator{\codim}{codim}
\title[Global bifurcation for a class of nonlinear ODE\lowercase{s}]{Global bifurcation\\ for a class of nonlinear ODE\lowercase{s}}
\subjclass{34C23, 53C21, 58J55}
\author[R. G. Bettiol]{Renato G. Bettiol}
\address{City University of New York (Lehman College) \newline
\indent Department of Mathematics  \newline
\indent 250 Bedford Park~Blvd W\newline
\indent Bronx, NY, 10468, USA }
\email{r.bettiol@lehman.cuny.edu}
\author[P. Piccione]{Paolo Piccione}
\address{Universidade de S\~ao Paulo \newline
\indent Departamento de Matem\'atica \newline
\indent Rua do Mat\~ao, 1010 \newline
\indent S\~ao Paulo, SP, 05508-090, Brazil}
\email{piccione@ime.usp.br}
\numberwithin{equation}{section}
\numberwithin{theorem}{section}
\date{\today}
\thanks{The first-named author was supported by grants from the National Science Foundation (DMS-1904342), PSC-CUNY (Award \#62074-00 50), and Fapesp (2019/19891-9). The second named author is partially sponsored by Fapesp (2016/23746-6 and 2019/09045-3) and CNPq, Brazil}
\begin{document}

\begin{abstract}
We briefly survey global bifurcation techniques, and illustrate their use by finding multiple positive periodic  solutions to a class of second order quasilinear ODEs related to the Yamabe problem. As an application, we give a bifurcation-theoretic proof of a classical nonuniqueness result for conformal metrics with constant scalar curvature, that was independently discovered by O.~Kobayashi and R.~Schoen in the 1980s.
\end{abstract}

\maketitle

\begin{section}{Introduction}
Bifurcation Theory and its applications to geometric variational problems has been a long-standing research area of the authors, since the very beginning of their collaboration at the \emph{Instituto de Matem\'atica e Estat\'\i stica} of the \emph{Universidade de S\~ao Paulo}, in Brazil. For that reason, this topic seemed the most natural choice of subject for an article prepared for the special issue of the \emph{S\~ao Paulo Journal of Mathematical Sciences} dedicated to the Golden Jubilee of that Institute. We are very grateful to the editors; in particular to Claudio Gorodski, for giving us the opportunity to contribute to this celebration, which is deeply meaningful to us.
\smallskip

We shall discuss in detail a bifurcation problem for periodic solutions to a class of nonlinear ordinary differential equations related to the Yamabe problem on a closed Riemannian manifold. More precisely, under a suitable Ansatz, the Yamabe equation reduces to an ODE of the type considered in this paper, in which the nonlinearity is given by a power (strictly greater than one) of the unknown function. 

The idea of applying Bifurcation Theory to geometric variational problems is being pursued by 
increasingly many mathematicians, in a growing number of interesting situations.
In particular, our approach here is inspired by a recent work of Betancourt de la Parra, Julio-Batalla, and Petean~\cite{Petean21}, where solutions to the Yamabe equation that are constant along the levelsets of a proper isoparametric function $f\colon M\to [t_0,t_1]$ are studied via a bifurcation problem for an ODE on $[t_0,t_1]$. A key difference, arising from the type of Ansatz considered, is manifested in the ODE boundary conditions: 
while \emph{Neumann} conditions on $[t_0,t_1]$ are needed in~\cite{Petean21}, that geometrically correspond to $f^{-1}(t_0)$ and $f^{-1}(t_1)$ being focal submanifolds (of codimension $\geq2$), 
we deal with \emph{periodic} boundary conditions, which instead correspond to having $f\colon M\to \mathds S^1$, with $\mathds S^1=[t_0,t_1]/\{t_0\sim t_1\}$, and all levelsets $f^{-1}(t)$ being of codimension $1$.
A more elementary introduction to geometric applications of Bifurcation Theory can be found in \cite{bp-notices}.

\subsection*{Main Statements}
We shall present a global bifurcation result for positive periodic solutions (with fixed period) $u=u(t)$ to scalar equations of the form
\begin{equation}\label{eq:bifODE}
u''-\mu (u-\vert u\vert^{q-1}u)=0
\end{equation}
where $\mu>0$ is the bifurcation parameter, and $q>1$ is fixed. Note that, for all $\mu>0$, the unique positive constant solution is $u\equiv1$. Moreover, if $u(t)$ solves \eqref{eq:bifODE}, then any translation $u(t+c)$, $c\in\mathds R$, also solves \eqref{eq:bifODE}. 
We say that two solutions are \emph{distinct} if they are not translations of each other.
The central result is:

\begin{mainthm}\label{mainthm:A}
For all $T>0$, denote by $n(\mu,T)$ the number of distinct positive $T$-periodic solutions to \eqref{eq:bifODE}. Then:
\begin{enumerate}[\rm (a)]
\item $n(\mu,T)=1$, if $\mu\leq\dfrac{4\pi^2}{T^2(q-1)}$;
\item $n(\mu,T) \geq k$, if \, $\dfrac{4\pi^2k^2}{T^2(q-1)} <\mu \leq \dfrac{4\pi^2(k+1)^2}{T^2(q-1)}$, with $k\in\mathds N$.
\end{enumerate}  
In particular, it follows from {\rm (b)} that $\liminf\limits_{\mu\to+\infty} \dfrac{n(\mu,T)}{\sqrt\mu}\geq \dfrac{T}{2\pi}\sqrt{q-1}$.
\end{mainthm}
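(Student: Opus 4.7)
The plan is to apply Rabinowitz's global bifurcation theorem to the map $F(\mu,u)=u''-\mu\bigl(u-|u|^{q-1}u\bigr)$ on a Banach space $X$ of $T$-periodic functions (say $H^2(\mathds R/T\mathds Z)$), whose zeros in the positive cone are precisely the positive $T$-periodic solutions to \eqref{eq:bifODE}. The trivial family $\{(\mu,1):\mu>0\}$ is the starting point, and the linearization at $u\equiv 1$ is $L_\mu v=v''+\mu(q-1)v$; its $T$-periodic kernel is nontrivial exactly when $\mu=\mu_k:=\frac{4\pi^2k^2}{T^2(q-1)}$ and is spanned by $\cos(2\pi kt/T)$ and $\sin(2\pi kt/T)$. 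To convert this $2$-dimensional kernel (arising from translation symmetry) into odd dimension, I would restrict $F$ to the subspace $X_{\mathrm e}\subset X$ of even $T$-periodic functions, using that each nonconstant $T$-periodic orbit admits an even representative (translate so a maximum sits at $t=0$, then apply uniqueness of the IVP). On $X_{\mathrm e}$ the kernel of $L_{\mu_k}$ is $1$-dimensional, spanned by $\cos(2\pi kt/T)$.

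Applying Rabinowitz's theorem in $X_{\mathrm e}$, I obtain at each $\mu_k$ a connected continuum $\mathcal{C}_k\subset\mathds R\times X_{\mathrm e}$ of nontrivial solutions emanating from $(\mu_k,1)$; by the Rabinowitz alternative, each $\mathcal{C}_k$ is either unbounded or returns to the trivial branch at some $(\mu_j,1)$. The return option is ruled out by a nodal invariant: along $\mathcal{C}_k$ the function $u-1$ has exactly $2k$ simple zeros per period, a count preserved by continuity that would be $2j\neq 2k$ at a putative collision with $(\mu_j,1)$. To promote ``unbounded'' to ``unbounded in $\mu$,'' I would use the conserved energy $E(u,u')=\tfrac12(u')^2-\mu\bigl(\tfrac{u^2}{2}-\tfrac{u^{q+1}}{q+1}\bigr)$ to derive a uniform bound $\|u\|_\infty\leq\bigl(\tfrac{q+1}{2}\bigr)^{1/(q-1)}$ and a uniform positive lower bound on $u$ along any positive periodic orbit (the level set $\{E=0\}$ is homoclinic to the origin, so positive orbits remain strictly inside it). Combined with the nodal invariant, this forces the $\mu$-projection of $\mathcal{C}_k$ to cover $[\mu_k,+\infty)$.

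Given these global continua, part (b) is immediate: at any $\mu\in(\mu_k,\mu_{k+1}]$ the branches $\mathcal{C}_1,\dots,\mathcal{C}_k$ contribute pairwise distinct nontrivial $T$-periodic solutions (distinguished by their nodal counts, hence not translates of one another), so $n(\mu,T)\geq k$. The $\liminf$ estimate then follows because the largest integer $k$ with $\mu_k<\mu$ grows like $\tfrac{T\sqrt{q-1}}{2\pi}\sqrt\mu$. For part (a), when $\mu\leq\mu_1$ no bifurcation has yet occurred, and nonexistence of nontrivial positive $T$-periodic solutions can be read off the phase portrait: every nonconstant periodic orbit in $\{u>0\}$ has period at least the linearized one, $T_0(\mu):=2\pi/\sqrt{\mu(q-1)}\geq T$, with equality achieved only in the degenerate limit to the equilibrium $(1,0)$, so no honest $T$-periodic orbit exists.

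\textbf{Main obstacle.} The crux is converting Rabinowitz's abstract unboundedness into the concrete statement that each $\mathcal{C}_k$ projects onto $[\mu_k,+\infty)$. One must simultaneously (i) invoke the nodal invariant to prevent $\mathcal{C}_k$ from returning to a different trivial point $(\mu_j,1)$, (ii) use a priori $L^\infty$ and positivity bounds to keep $\mathcal{C}_k$ inside the positive cone without blow-up, and (iii) exclude a ``bounded loop'' scenario in which the continuum lives in a bounded $\mu$-interval and returns to $(\mu_k,1)$; choreographing these three ingredients is where the real work lies.
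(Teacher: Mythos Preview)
Your approach to part~(b) is essentially the paper's: restriction to an even subspace to force simple kernels, Crandall--Rabinowitz local bifurcation at each $\mu_k$, Rabinowitz's global alternative, the nodal count of $u-1$ as a branch-separating invariant, and a priori bounds from the phase portrait (the paper uses the explicit homoclinic $u_{q,\mu}(t)=A_q\cosh(B_{q,\mu}t)^{C_q}$, which is exactly your energy level $\{E=0\}$). One remark: your ``main obstacle''~(iii), a bounded loop returning to $(\mu_k,1)$ itself, is not a genuine alternative in Rabinowitz's theorem---the odd-crossing/degree argument already excludes it, so only reattachment at a \emph{different} $(\mu_j,1)$ or unboundedness can occur. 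With properness of the $\mu$-projection (your a priori bounds give this), unboundedness indeed forces $\Pi(\mathcal C_k)\supset[\mu_k,+\infty)$.

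Your part~(a), however, diverges from the paper and contains a real gap. You assert that every nonconstant positive periodic orbit has minimal period strictly greater than the linearized period $2\pi/\sqrt{\mu(q-1)}$. This is equivalent to a global lower bound on the period function of the center at $(1,0)$, and it is \emph{not} readable off the phase portrait: the phase portrait tells you which orbits are closed, not how long they take. Establishing that the period function for $u''+\mu(u^q-u)=0$ is bounded below by (indeed, monotone increasing from) its linearized value is a nontrivial analytic fact requiring a separate argument (e.g., a Chicone--Schaaf type convexity criterion), and you give no indication of how you would prove it. The paper avoids this entirely: its proof of~(a) follows Licois--V\'eron, deriving an integral identity for $v=u^{-1}$ and combining it with Wirtinger's inequality applied to $v^{-1/2}$ to force $v'\equiv 0$ when $\mu(q-1)\le 4\pi^2/T^2$. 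That argument is self-contained and sharp; your period-function route is plausible and would ultimately yield the same sharp threshold, but as written it rests on an unproven (and nonobvious) claim.
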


Statement (a) means that, for $\mu>0$ sufficiently small, the constant function $u\equiv1$ is the unique positive $T$-periodic solution to \eqref{eq:bifODE}, while (b) implies that $\lim\limits_{\mu\to+\infty}n(\mu,T)=+\infty$. 
We show that a positive nonconstant periodic solution to \eqref{eq:bifODE} attains $1$ as a regular value, and give a more precise statement (Theorem~\ref{thm:finalresult}) on the parity and number of zeros of $u-1$, where $u$ is such a solution, yielding the above estimate on $n(\mu,T)$ as $\mu\nearrow+\infty$.\smallskip
 
As an application, we obtain a bifurcation-theoretic proof of a result first obtained (independently) by O.~Kobayashi \cite{Kobayashi85} and R.~Schoen \cite{Schoen89}, on the number of solutions to the Yamabe problem in products $N\times\mathds S^1$, where $N$ is a closed Riemannian manifold with positive constant scalar curvature. This classical result has been extended in several different ways, see e.g.~\cite{petean-2010} and \cite{otoba-peteanDGA}.

\begin{mainthm}\label{mainthmB}
Let $(N^n,g)$, $n\ge2$, be a closed Riemannian manifold with constant scalar curvature $R_N>0$, and $(\mathds S^1,r^2\dd t^2)$ be a circle of length $2\pi r$. 
The number of distinct (unit volume) constant scalar curvature metrics on $M=N\times\mathds S^1$ in the conformal class of the product metric $g\oplus r^2\dd t^2$ goes to infinity as $r\nearrow+\infty$ (at least linearly in $r$).
\end{mainthm}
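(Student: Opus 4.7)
The plan is to reduce Theorem~\ref{mainthmB} to a direct application of Theorem~\ref{mainthm:A} via the classical Ansatz that the conformal factor depends only on the $\mathds S^1$ coordinate. Setting $m = n+1$ and $g_M = g \oplus r^2\, \dd t^2$, a conformal metric $\tilde g = u^{4/(m-2)} g_M$ has constant scalar curvature $R_{\tilde g}$ if and only if the standard Yamabe equation
\begin{equation*}
-\frac{4(m-1)}{m-2}\,\Delta_{g_M} u + R_{g_M}\,u \;=\; R_{\tilde g}\, u^{(m+2)/(m-2)}
\end{equation*}
holds. Since $R_{g_M} = R_N$ is constant, the Ansatz $u = u(t)$ reduces the Laplacian to $\frac{1}{r^2}u''(t)$, and normalizing so that $u \equiv 1$ recovers the product metric forces $R_{\tilde g} = R_N$. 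Rearranging, the resulting ODE is precisely of the form~\eqref{eq:bifODE} with
\begin{equation*}
q \;=\; \frac{n+3}{n-1}, \qquad \mu(r) \;:=\; \frac{(n-1)R_N\, r^2}{4n}, \qquad T \;=\; 2\pi r.
\end{equation*}

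Next, I would invoke Theorem~\ref{mainthm:A}(b) with these parameters. Since $q - 1 = 4/(n-1)$, a short arithmetic manipulation converts the threshold $\frac{4\pi^2 k^2}{T^2(q-1)} < \mu(r)$ into the simple condition $k^2 < R_N\, r^4/n$. Hence
\begin{equation*}
n\bigl(\mu(r),\, 2\pi r\bigr) \;\geq\; \left\lfloor r^2\sqrt{R_N/n}\,\right\rfloor,
\end{equation*}
which tends to $+\infty$ at least linearly in $r$ (in fact, quadratically).

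The only step not handled automatically by the ODE count is the translation from \emph{distinct positive $T$-periodic solutions} to \emph{distinct unit-volume CSC metrics in the conformal class}. Two solutions of~\eqref{eq:bifODE} differing by an $\mathds S^1$-translation yield CSC metrics isometric via the corresponding translation of $g_M$, and are by definition identified in the $n(\mu,T)$ count; conversely, two solutions not related by translation cannot yield the same metric after unit-volume rescaling, because a scaling $u \mapsto cu$ only preserves the normalization $R_{\tilde g} = R_N$ when $c = 1$ (as $q \neq 1$). I expect this bookkeeping to be the only real subtlety: once the Yamabe ODE is matched with~\eqref{eq:bifODE}, Theorem~\ref{mainthm:A} carries the entire analytic load and no further obstacle should appear.
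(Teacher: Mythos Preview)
Your approach is essentially identical to the paper's: both reduce the Yamabe problem on $N\times\mathds S^1$ to the ODE~\eqref{eq:bifODE} via conformal factors depending only on $t$, and then invoke Theorem~\ref{mainthm:A}. However, there is a coordinate inconsistency in your parameters that leads to the erroneous ``in fact, quadratically'' claim.

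You correctly compute the Laplacian as $\frac{1}{r^2}u''(t)$, which means you are working in the angular coordinate $t\in\mathds R/2\pi\mathds Z$ with metric $r^2\,\dd t^2$. In that coordinate the period is $T=2\pi$, \emph{not} $T=2\pi r$. Alternatively, in the arclength coordinate $s=rt$ the period is indeed $2\pi r$, but then the Laplacian is simply $u''(s)$ with no $r^{-2}$ factor, and consequently $\mu=\frac{(n-1)R_N}{4n}$ does not depend on $r$. Either consistent choice makes the threshold $\frac{4\pi^2k^2}{T^2(q-1)}<\mu$ equivalent to
\[
k^2 < \frac{R_N\,r^2}{n}, \qquad\text{i.e.,}\qquad k < r\sqrt{R_N/n},
\]
which matches the paper's degeneracy instants $r_k=k\sqrt{n/R_N}$ and gives growth that is \emph{linear} in $r$, as stated in Theorem~\ref{mainthmB}. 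Your double-counting of $r$ (once in $\mu$ via the Laplacian, once in $T$) produced the spurious $r^4$ and the incorrect quadratic bound. With this correction, your argument and the paper's coincide; your remarks on distinguishing unit-volume metrics are also in line with the paper's treatment.
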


In the above (see Theorem~\ref{thm:finalresult_yamabe} for details), the distinct constant scalar curvature metrics in the conformal class of the product metric $g\oplus r^2\dd t^2$ are obtained multiplying it by (positive) smooth conformal factors that depend \emph{only} on $t \in\mathds S^1$; i.e., are constant on each slice of the form $N\times\{t\}$, where $t \in\mathds S^1$. 
If $r<\sqrt{n/R_N}$, the (trivial) solution $g\oplus r^2\dd t^2$ is unique among such conformal factors; and, if $(N^n,g)$ is the round sphere $\mathds S^n$, then this uniqueness holds among \emph{all} conformal factors.
\end{section}

\begin{section}{A crash course on classical Bifurcation Theory}\label{sec:crash}
We now provide a brief overview of results from Bifurcation Theory, following the classical approach of the Rabinowitz school, that we hope will serve as an invitation to this beautiful subject. Among the vast literature, we recommend \cite{BufTol2020} for an introduction, and \cite{kielhofer} for a more comprehensive treatise.

Let $X$ and $Y$ be real Banach spaces, $I\subset\mathds R$ be a (possibly infinite) interval, and $\mathcal P\colon I\times X\to Y$ be a function of class $C^\ell$, $\ell \ge1$, satisfying:
\begin{equation}\label{eq:basbifeqn}
\phantom{\qquad\forall\,\mu\in I.}\mathcal P(\mu,x_0)=0,\qquad\forall\,\mu\in I,
\end{equation}
for some fixed $x_0\in X$. Let us denote by $\mathcal S$ the set of zeros of $\mathcal P$, that is,
\begin{equation}\label{eq:defG}
\mathcal S:=\mathcal P^{-1}(0)\subset I\times X.
\end{equation}
For $\mu_0\in I$, the point $(\mu_0,x_0)$ is a \emph{bifurcation point} for the equation \eqref{eq:basbifeqn} if every neighborhood of $(\mu_0,x_0)$ in $I\times X$ contains some point $(\mu,x)$, with $x\ne x_0$, such that $\mathcal P(\mu,x)=0$. In the usual terminology, the set $\mathcal B_{\mathrm{triv}}=I\times\{x_0\}\subset I\times X$ is referred to as the \emph{trivial branch of solutions} for the equation $\mathcal P(\mu,x)=0$, and if $(\mu_0,x_0)$ is a bifurcation point for the equation \eqref{eq:basbifeqn}, then $\mu_0$ is an \emph{instant of bifurcation} along $\mathcal B_{\mathrm{triv}}$. Alternatively, $\mu_0$ is an instant of bifurcation along $\mathcal B_{\mathrm{triv}}$ if $(\mu_0,x_0)$ belongs to the closure in $I\times X$ of the set 
$\mathcal S\setminus\mathcal B_{\mathrm{triv}}$. The connected component of $(\mu_0,x_0)$ in the closure of 
$\mathcal S\setminus\mathcal B_{\mathrm{triv}}$ is called the \emph{bifurcation branch} issuing from the bifurcation point $(\mu_0,x_0)$.

An immediate application of the Implicit Function Theorem gives a necessary condition for the existence of bifurcation branches issuing from a point $(\mu_0,x_0)$ along the trivial branch for equation \eqref{eq:basbifeqn}: if the derivative 
$\frac{\partial\mathcal P}{\partial x}(\mu_0,x_0)\colon X\to Y$
is an isomorphism, then bifurcation cannot occur at $(\mu_0,x_0)$. It is also well-known that failure of this condition is, in general, \emph{not sufficient} to guarantee bifurcation. 
What goes under the name of \emph{Bifurcation Theory} is a collection of results giving sufficient conditions for the existence of bifurcation, and describing the geometry of the bifurcation set. Typically, existence of bifurcation is proven by topological methods; more precisely, bifurcation is detected by a jump of some topological invariant associated to the solutions along the trivial branch. As an example, if \eqref{eq:basbifeqn} is of a variational nature, i.e., if $\mathcal P(\mu,\cdot)$ is the derivative of some real-valued $C^2$-function $f_\mu$ having $x_0$ as a critical point for all $\mu$, then a sufficient condition for bifurcation at $(\mu_0,x_0)$ is that the \emph{Morse index} of $f_\mu$ at $x_0$ jumps at $\mu=\mu_0$. 

Standard topological methods are better suited to finite-dimensional problems, despite the fact that the most interesting bifurcation problems for ODEs, PDEs, etc., involve an infinite-dimensional setup. It turns out that a finite-dimensional reduction in bifurcation problems, as well as in many other nonlinear functional analytical problems, is possible under suitable \emph{Fredholmness} assumptions. The most classical result in this direction is the so-called \emph{Lyapunov--Schmidt reduction}. The central result of this theory gives a description of the set $\mathcal S$ near a point $(\mu_0,x_*)\in \mathcal S\subset  I\times X$ in terms of the zero set of a smooth function on a finite-dimensional space, under a Fredholmness assumption:

\begin{theorem}[Lyapunov--Schmidt reduction]\label{thm:LSreduction}
Suppose that $\mathcal P(\mu_0,x_*)=0$, with $(\mu_0,x_*)\in I\times X$, and assume that the derivative
$L=\frac{\partial\mathcal P}{\partial x}(\mu_0,x_*)$,
\[L\colon X\longrightarrow Y,\]
is a Fredholm operator, with $\Ker(L)\ne\{0\}$ and $k=\codim_Y\!\big(L(X)\big)$. Then, there exist open sets $U\subset X$, $V\subset\mathds R\times\Ker(L)$, and maps of class $C^\ell$
\[\psi\colon V\to X,\qquad h\colon V\to\mathds R^k,\]
with $(\mu_0,x_*)\in U$, $(\mu_0,0)\in V$, and $\psi(\mu_0,0)=x_*$, such that
\[(\mu,x)\in\mathcal S\cap U\quad\Longleftrightarrow\quad\psi(\mu,\xi)=x\ \ \text{for some}\ (\mu,\xi)\in h^{-1}(0).\]
\end{theorem}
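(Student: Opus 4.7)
The plan is a standard finite-dimensional reduction via the Implicit Function Theorem. First, I would exploit the Fredholmness of $L$ to produce topological direct sum decompositions $X = \Ker(L) \oplus X_1$ and $Y = L(X) \oplus Y_1$: the kernel is finite-dimensional, hence closed and complemented in $X$, while the range is closed and of finite codimension $k$ in $Y$, so admits a $k$-dimensional complement $Y_1$. Let $\Pi\colon Y\to L(X)$ and $\mathrm{Id}-\Pi\colon Y\to Y_1$ be the associated continuous projections, and fix once and for all a linear isomorphism $Y_1\cong\mathds R^k$.

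Next, writing $x = x_* + \xi + y$ with $\xi\in\Ker(L)$ and $y\in X_1$, the equation $\mathcal P(\mu,x)=0$ splits as the system
\begin{align*}
F(\mu,\xi,y) &:= \Pi\,\mathcal P(\mu,x_*+\xi+y) = 0,\\
G(\mu,\xi,y) &:= (\mathrm{Id}-\Pi)\,\mathcal P(\mu,x_*+\xi+y) = 0.
\end{align*}
I would apply the Implicit Function Theorem to $F=0$ in the unknown $y\in X_1$. At the base point $(\mu_0,0,0)$, the partial derivative $\partial_y F(\mu_0,0,0) = \Pi\circ L\vert_{X_1}\colon X_1\to L(X)$ is an isomorphism of Banach spaces: it is surjective because $L(X_1)=L(X)$, and injective because $X_1\cap\Ker(L)=\{0\}$, so the Open Mapping Theorem yields invertibility. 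This produces an open set $V\subset\mathds R\times\Ker(L)$ containing $(\mu_0,0)$ and a $C^\ell$ map $\varphi\colon V\to X_1$ with $\varphi(\mu_0,0)=0$ that solves $F(\mu,\xi,y)=0$ uniquely for $y$ in a neighborhood of $0\in X_1$.

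Finally, I would set $\psi(\mu,\xi) := x_* + \xi + \varphi(\mu,\xi)$ and $h(\mu,\xi) := G(\mu,\xi,\varphi(\mu,\xi))$, viewed as a map into $\mathds R^k$ through the fixed identification $Y_1\cong\mathds R^k$. Choosing $U$ to be a sufficiently small neighborhood of $(\mu_0,x_*)$ inside the image of $\psi$ on $V$, the equivalence $(\mu,x)\in\mathcal S\cap U\Longleftrightarrow x=\psi(\mu,\xi)$ for some $(\mu,\xi)\in h^{-1}(0)$ follows from the uniqueness clause of the IFT together with the fact that the decomposition $X=\Ker(L)\oplus X_1$ expresses every $x$ near $x_*$ uniquely as $x_*+\xi+y$. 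The main technical point is the verification that $\Pi\circ L\vert_{X_1}$ is an isomorphism, which rests crucially on the two Fredholmness consequences used above: closedness of $L(X)$ (making $\Pi$ continuous) and complementability of $\Ker(L)$ (enabling the direct sum decomposition of $X$ to begin with). Once this linear algebra input is secured, the nonlinear content is entirely packaged into a single application of the IFT.
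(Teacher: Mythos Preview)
Your argument is the standard Lyapunov--Schmidt reduction and is correct. The paper itself does not give a proof of this statement; it simply cites \cite[Theorem~8.2.1]{BufTol2020} and \cite[Theorem~I.2.3]{kielhofer}, where precisely the argument you outline appears. One cosmetic slip: when you say ``$U$ is a sufficiently small neighborhood of $(\mu_0,x_*)$ inside the image of $\psi$ on $V$'', you mean the image of $(\mu,\xi)\mapsto(\mu,\psi(\mu,\xi))$ in $I\times X$, not the image of $\psi$ in $X$; this matches the (slightly imprecise) convention in the theorem statement, where $U$ is meant to sit in $I\times X$ despite the typo ``$U\subset X$''.
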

\begin{proof}
See e.g.~\cite[Theorem~8.2.1, p.\ 126]{BufTol2020}, or \cite[Theorem~I.2.3, p.\ 7]{kielhofer}.
\end{proof}
In other words, Theorem~\ref{thm:LSreduction} reduces the equation $\mathcal P(\mu,x)=0$, for $(\mu,x)$ near $(\mu_0,x_*)$, to the equation $h(\mu,\xi)=0$, where $h$ is a function whose domain is an open subset of a finite-dimensional space.

The Lyapunov--Schmidt reduction and the Implicit Function Theorem are the essential ingredients for the proof of a classical bifurcation result of Crandall and Rabinowitz \cite{crandall-rabinowitz}, known in the literature as \emph{bifurcation from simple eigenvalues}. In order to give a basic statement of this result, let us go back to considering the trivial branch of solutions $x=x_0$ to the equation $\mathcal P(\mu,x)=0$.

\begin{theorem}[Crandall--Rabinowitz]\label{thm:CraRab}
Using the same notation as in Theorem~\ref{thm:LSreduction} with $x_*=x_0$, assume $\mathcal P$ is a map of class $C^\ell$ with $\ell\in\{2,\cdots,\infty,\omega\}$, and that:
\begin{enumerate}[\rm (a)]
\item  $L=\frac{\partial\mathcal P}{\partial x}(\mu_0,x_0)$ is a Fredholm operator of index $0$;
\item  $\Ker(L)=\mathds R\cdot \xi_0$ for some $\xi_0\in X\setminus\{0\}$, i.e., $\Ker(L)$ is one-dimensional;
\item  $\dfrac{\dd}{\dd\mu}\Big\vert_{\mu=\mu_0}\left(\dfrac{\partial\mathcal P}{\partial x}(\mu,x_0)\xi_0\right) \not\in L(X)$.
\end{enumerate}
Then $(\mu_0,x_0)$ is a bifurcation point for the equation \eqref{eq:basbifeqn}. More precisely, for a sufficiently small neighborhood $U$ of $(\mu_0,x_0)$ in $I\times X$, the set $\mathcal S\cap U$ consists of the points of the trivial branch $U\cap\mathcal B_{\mathrm{triv}}$, and the points in the support of a $C^{\ell-1}$-path $\left(-\varepsilon,\varepsilon\right)\ni s\mapsto\big(\mu(s),x(s)\big)\in I\times X$, with $\mu(0)=\mu_0$, 
$x(0)=x_0$, $x(s)\ne x_0$ for $s\ne0$,
and $x'(0)=\xi_0$.
\end{theorem}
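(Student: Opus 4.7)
The plan is to apply the Lyapunov--Schmidt reduction (Theorem~\ref{thm:LSreduction}) to turn $\mathcal P(\mu,x)=0$ into a scalar equation near $(\mu_0,x_0)$. Since hypotheses (a) and (b) give $\codim_Y\!\big(L(X)\big)=\dim\Ker(L)=1$, the bifurcation function produced by Theorem~\ref{thm:LSreduction} is a scalar map $h\colon V\to\mathds R$ on an open neighborhood $V$ of $(\mu_0,0)$ in $\mathds R\times\Ker(L)\cong\mathds R^2$. Parameterizing $\Ker(L)$ by $s\mapsto s\xi_0$, I write $h=h(\mu,s)$ and aim to describe its zero set near $(\mu_0,0)$.

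Because $\mathcal P(\mu,x_0)\equiv 0$, the trivial branch forces $h(\mu,0)\equiv 0$, so Hadamard's lemma produces a $C^{\ell-1}$ function $\widetilde h$ with
\[
h(\mu,s)=s\,\widetilde h(\mu,s);
\]
nontrivial solutions near $(\mu_0,x_0)$ then correspond to zeros of $\widetilde h$. The whole theorem thus reduces to checking that the Implicit Function Theorem applies to $\widetilde h=0$ at $(\mu_0,0)$, which amounts to showing $\partial_\mu\widetilde h(\mu_0,0)\ne 0$.

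This will be the main technical step. I would write $h$ explicitly from the Lyapunov--Schmidt splitting as $h(\mu,s)=(\mathrm{Id}-Q)\,\mathcal P\big(\mu,x_0+s\xi_0+v(\mu,s)\big)$, where $Q\colon Y\to L(X)$ is the projection along a fixed complement $Y_1\cong\mathds R$ of $L(X)$, and $v(\mu,s)$ takes values in a fixed topological complement $X_1$ of $\Ker(L)$, uniquely solving $Q\mathcal P(\mu,x_0+s\xi_0+v)=0$ (with $v(\mu_0,0)=0$) by the IFT, since $QL|_{X_1}\colon X_1\to L(X)$ is an isomorphism. Differentiating the defining equation for $v$ and using $L\xi_0=0$ yields $\partial_s v(\mu_0,0)\in X_1\cap\Ker(L)=\{0\}$, while uniqueness forces $v(\mu,0)\equiv 0$, and thus $\partial_\mu v(\mu,0)\equiv 0$. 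Plugging these identities into the chain-rule expansion of $\partial_\mu\partial_s h$ at $(\mu_0,0)$ cancels every term except
\[
\partial_\mu\widetilde h(\mu_0,0)=(\mathrm{Id}-Q)\left[\frac{\dd}{\dd\mu}\bigg|_{\mu=\mu_0}\frac{\partial\mathcal P}{\partial x}(\mu,x_0)\,\xi_0\right],
\]
which is nonzero precisely by hypothesis~(c).

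The Implicit Function Theorem applied to $\widetilde h$ then produces a $C^{\ell-1}$ function $s\mapsto\mu(s)$ with $\mu(0)=\mu_0$, and the bifurcating branch is $x(s):=x_0+s\xi_0+v(\mu(s),s)$. The asserted properties $x(0)=x_0$, $x(s)\ne x_0$ for $s\ne 0$, and $x'(0)=\xi_0$ all follow from the vanishing of $v(\mu_0,0)$, $\partial_s v(\mu_0,0)$ and $\partial_\mu v(\mu_0,0)$; and the local description of $\mathcal S\cap U$ as the union of the trivial branch with this curve follows from the Lyapunov--Schmidt correspondence $\mathcal S\cap U\leftrightarrow h^{-1}(0)$ together with the factorization $h=s\,\widetilde h$.
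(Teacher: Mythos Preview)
Your sketch is correct and is precisely the classical Lyapunov--Schmidt/Implicit Function Theorem argument for Crandall--Rabinowitz; the computation of $\partial_\mu\partial_s h(\mu_0,0)$ is handled cleanly, and the reasons for $v(\mu,0)\equiv0$, $\partial_s v(\mu_0,0)=0$, and $x(s)\ne x_0$ for $s\ne0$ are all sound. Note, however, that the paper itself does not give a proof of this theorem at all: its ``proof'' consists solely of citations to \cite{crandall-rabinowitz}, \cite{kielhofer}, and \cite{BufTol2020}, so you have in fact supplied (a sketch of) the argument that those references carry out, rather than reproduced anything the paper does.
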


\begin{proof}
The original result is proven in \cite[Theorem~1.7]{crandall-rabinowitz}, see also \cite[Theorem~I.4.1]{kielhofer} or \cite[Theorem~8.3.1]{BufTol2020}.
\end{proof}

The next basic result from Bifurcation Theory deals with the global geometry of a bifurcation branch. The following is a theorem originally due to Rabinowitz \cite{rabinowitz}, which states that, under a suitable properness assumption, the bifurcation branch of solutions to $\mathcal P(\mu,x)=0$ issuing from a bifurcation point $(\mu_0,x_0)$, with the assumptions of Theorem~\ref{thm:CraRab}, satisfies the following dichotomy: it either reattaches to the trivial branch, or it is noncompact.\smallskip

\begin{theorem}[Rabinowitz]\label{thm:rabinowitz}
Let $(\mu_0,x_0)$ be a bifurcation point for the equation $\mathcal P(\mu,x)=0$ that satisfies the assumptions of Theorem~\ref{thm:CraRab}, and let $\mathcal B_{\mu_0}$ be the bifurcation branch issuing from $(\mu_0,x_0)$. Denote by $(-\varepsilon,\varepsilon)\ni s\mapsto\big(\mu(s),x(s)\big)$ the local bifurcation branch of solutions to $\mathcal P(\mu,x)=0$ near $(\mu_0,x_0)$, and assume that $s\mapsto \mu(s)$ is not constant near $s=0$.
Assume also that the restriction to the closed and bounded subsets of $\mathcal S$ of 
the projection $\Pi\colon I\times X\to I$ onto the first factor is a proper map. 
Then, the path $s\mapsto\big(\mu(s),x(s)\big)$ can be extended to a continuous path
$\left(-\varepsilon,+\infty\right)\ni s\mapsto \big(\mu(s),x(s)\big)\in I\times X$ whose support is contained in $\mathcal B_{\mu_0}$, such that either one of the two alternatives occurs:
\begin{enumerate}[\rm (A)]
\item $\lim\limits_{s\to+\infty}\big(\mu(s),x(s)\big)=(\mu_1,x_0)$, $\mu_1\ne\mu_0$;
\item $\big(\mu(s),x(s)\big)$ approaches the boundary of $I\times X$ as $s\to+\infty$, i.e., one of the following is true:
\begin{enumerate}[\rm (B-1)]
\item $\lim\limits_{s\to+\infty}\big(\mu(s),x(s)\big)\in\partial I\times X$; 
\item $\lim\limits_{s\to+\infty}\big\Vert x(s)\Vert=+\infty$.
\end{enumerate}
\end{enumerate}
If $\mathcal P$ is real analytic, then $s\mapsto\big(\mu(s),x(s)\big)$ can also be chosen real analytic.
\end{theorem}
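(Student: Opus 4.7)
The strategy is to apply Leray--Schauder degree theory. First, using that $L=\frac{\partial\mathcal P}{\partial x}(\mu_0,x_0)$ is Fredholm of index $0$, one recasts the equation $\mathcal P(\mu,x)=0$ near the trivial branch as a compact perturbation of the identity, $x-x_0=K(\mu,x-x_0)$, so that the Leray--Schauder degree $\deg\!\big(I-K(\mu,\cdot),\Omega,0\big)$ is well defined on suitable open sets $\Omega\subset X$. The key computational ingredient is the jump in the Leray--Schauder index of $x_0$ as $\mu$ crosses $\mu_0$: hypotheses (b) and (c) of Theorem~\ref{thm:CraRab} say that $0$ is a simple eigenvalue of $L$ that crosses transversally under perturbation in $\mu$, so the classical simple-eigenvalue-crossing formula yields $\mathrm{ind}_{\mathrm{LS}}\!\big(x_0,\,I-K(\mu_0-\delta,\cdot)\big)=-\mathrm{ind}_{\mathrm{LS}}\!\big(x_0,\,I-K(\mu_0+\delta,\cdot)\big)$ for all sufficiently small $\delta>0$.

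\medskip

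Next, let $\mathcal B_{\mu_0}$ denote the connected component of $(\mu_0,x_0)$ in the closure of $\mathcal S\setminus\mathcal B_{\mathrm{triv}}$, and argue by contradiction: suppose none of the alternatives (A), (B-1), (B-2) holds. The properness of $\Pi\colon I\times X\to I$ on closed bounded subsets of $\mathcal S$ then forces $\mathcal B_{\mu_0}$ to be a compact subset of $I\times X$ that meets $\mathcal B_{\mathrm{triv}}$ only at $(\mu_0,x_0)$. Invoking Whyburn's topological separation lemma (see \cite{BufTol2020} or \cite{kielhofer}), one produces a bounded open neighborhood $\mathcal O\subset I\times X$ of $\mathcal B_{\mu_0}$ whose boundary is disjoint from $\mathcal S$. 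Homotopy invariance of the Leray--Schauder degree on the slices $\mathcal O_\mu:=\{x:(\mu,x)\in\mathcal O\}$ makes $\deg\!\big(I-K(\mu,\cdot),\mathcal O_\mu,0\big)$ independent of $\mu\in\Pi(\mathcal O)$. Yet for $\mu$ slightly below and above $\mu_0$ the only zero in $\mathcal O_\mu$ is $x_0$, whose index changes sign across $\mu_0$ by the previous step, giving the desired contradiction and hence the dichotomy (A)/(B).

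\medskip

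Finally, to extend the local Crandall--Rabinowitz path to a globally continuous one within $\mathcal B_{\mu_0}$, one uses that away from secondary bifurcation points the zero set $\mathcal S$ is locally a one-dimensional $C^{\ell-1}$-submanifold by the Implicit Function Theorem, so arclength parametrization and concatenation yield the extension. In the real-analytic case, one appeals to the Dancer--Buffoni--Toland analytic path theorem, which uses Weierstrass preparation to resolve self-intersections and secondary bifurcations while preserving analyticity. The main obstacle of the whole argument is precisely this path-extraction step: the topological part only delivers a connected continuum, and promoting it to a canonical continuous (let alone analytic) parametrization requires substantial additional work whenever $\mathcal S$ exhibits crossings along $\mathcal B_{\mu_0}$. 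The auxiliary assumption that $s\mapsto\mu(s)$ is nonconstant near $s=0$ is what rules out the degenerate situation in which the path would be forced to run vertically inside $\{\mu_0\}\times X$.
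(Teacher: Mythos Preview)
The paper does not actually prove this theorem: its ``proof'' consists solely of citations to \cite{rabinowitz}, \cite{kielhofer}, and \cite{BufTol2020}. Your sketch, by contrast, outlines the classical argument that those references contain --- Leray--Schauder degree theory, the index jump at a simple eigenvalue crossing, Whyburn's separation lemma to isolate a putatively compact branch, and the Dancer--Buffoni--Toland machinery for the analytic path extraction. In that sense you are supplying precisely what the paper defers to the literature, and the outline is faithful to the standard proofs.

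Two small comments. First, the step ``using that $L$ is Fredholm of index $0$, one recasts the equation as a compact perturbation of the identity'' deserves a word: Fredholmness alone does not hand you this; one writes $L=A+K$ with $A$ invertible and $K$ compact, and then composes with $A^{-1}$. Second, you are right to flag the path-extraction step as the delicate one. The classical Rabinowitz theorem \cite[Theorem~1.3]{rabinowitz} yields only a connected \emph{continuum}, not a parametrized path; the continuous-path formulation stated here is really the analytic theorem of \cite[Theorem~9.1.1]{BufTol2020}, and the paper's phrasing in the non-analytic case is somewhat informal on this point. Your sketch already distinguishes these two layers correctly.
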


\begin{proof}
See for instance \cite[Theorem~1.3]{rabinowitz} or \cite[Theorem II.5.8]{kielhofer}, or \cite[Theorem~9.1.1]{BufTol2020} for the real analytic case.
\end{proof}

\begin{remark}\label{rem:other-side}
The path $s\mapsto \big(\mu(s),x(s)\big)$ in \Cref{thm:rabinowitz} can be further extended to $\left(-\infty,+\infty\right)\ni s\mapsto \big(\mu(s),x(s)\big)\in I\times X$, by applying the result again replacing $s$ with $-s$. Alternatives (A) and (B) are independent at each of the limits $s=\pm \infty$.    
\end{remark}

The above Theorems~\ref{thm:CraRab} and~\ref{thm:rabinowitz} are the main tools underlying the proof of the results in the present paper, as well as several other nonuniqueness results in geometric problems studied by the authors, see e.g.~\cite{bp-notices} for a short survey.
\end{section}

\begin{section}{A compactness result}
In this section, we consider more general second-order periodic boundary value problems of the form \eqref{eq:bvp2}, before specializing to the quasilinear equation \eqref{eq:bifODE}.

\begin{proposition}\label{prop:compactness}
Assume that $f\colon\mathds R\to\mathds R$ is a function of class $C^\ell$, $\ell\ge1$, that satisfies $f(0)=f(u_0)=0$ for some $u_0>0$, 
$f<0$ in $\left(0,u_0\right)$ and $f>0$ in $\left(u_0,+\infty\right)$. Then, the set of positive solutions to the boundary value problem:
\begin{equation}\label{eq:bvp2}
u''+f(u)=0,\quad u(a)=u(b),\quad u'(a)=u'(b),
\end{equation}
is compact in the $C^{\ell+1}$-topology. 
\end{proposition}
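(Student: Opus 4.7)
My plan is to exploit the Hamiltonian structure of $u'' + f(u) = 0$. Setting $F(u) := \int_0^u f(s)\,ds$, the energy $E := \tfrac12(u')^2 + F(u)$ is conserved along solutions. I will establish uniform $C^0$ and $C^1$ bounds on positive solutions of the BVP together with a positive lower bound $u \ge \varepsilon > 0$, then bootstrap higher regularity via $u'' = -f(u)$ and conclude using continuous dependence on initial data.

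For any positive periodic solution $u$, set $u_- := \min u$ and $u_+ := \max u$; by periodicity these are attained in $[a,b]$. At such extrema $u'=0$ and $u'' = -f(u)$, so the sign of $u''$ at the min versus the max combined with the sign hypotheses on $f$ forces $u_- \le u_0 \le u_+$. Evaluating the conserved energy at a minimum yields $E = F(u_-) \in [F(u_0),\,0]$, since $F$ is strictly decreasing on $[0,u_0]$. Hence the energy automatically lies in a fixed compact interval, giving the $C^1$ bound $\|u'\|_\infty^2 \le -2F(u_0)$. Moreover $F(u_+) = E \le 0$ and $F$ strictly increasing on $[u_0,+\infty)$ bound $u_+$ above by $F^{-1}(0)$, under the natural assumption---satisfied by the $f$ used later in the paper, for which $F(u)\to+\infty$---that $F$ eventually becomes nonnegative.

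The heart of the argument is the uniform positive lower bound $u_- \ge \varepsilon > 0$, which I would establish by contradiction. Suppose a sequence of positive solutions $u_n$ has $u_{n,-} \to 0$. Then $E_n = F(u_{n,-}) \to F(0) = 0$, and the minimal period of $u_n$ equals
\[
T_n^{\min} = 2\int_{u_{n,-}}^{u_{n,+}} \frac{du}{\sqrt{2(E_n - F(u))}}.
\]
Because $f(0) = 0$ makes $u \equiv 0$ an equilibrium of the ODE, orbits approaching it take infinite time: near $u = 0$ one has $F(u) = \tfrac12 f'(0)u^2 + o(u^2)$ with $f'(0) \le 0$ (from the sign of $f$ on $(0,u_0)$), so the integrand grows at least like $1/u$ and the integral diverges as $u_{n,-} \to 0^+$. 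Hence $T_n^{\min} \to +\infty$. But the periodic BC forces $T_n^{\min}$ to divide $b-a$, so $T_n^{\min} \le b-a$---a contradiction.

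With uniform bounds $\varepsilon \le u \le F^{-1}(0)$ and $\|u'\|_\infty \le M$ in hand, the initial data $(u(a), u'(a))$ of every positive solution lie in a fixed compact subset of $(0,+\infty) \times \mathds R$. Continuous dependence on initial conditions for the $C^\ell$ ODE (combined with the identity $u'' = -f(u)$ to boost regularity to $C^{\ell+2}$) implies that any sequence of positive solutions admits a $C^{\ell+1}$-convergent subsequence, whose limit inherits the positivity bound and satisfies the BVP, hence is itself a positive solution. I expect the principal obstacle to be the lower bound step: one must quantitatively control the divergence of the period integral near the saddle at $u = 0$, and in particular handle both the generic case $f'(0)<0$ and the degenerate case $f'(0)=0$ via a higher-order Taylor expansion of $F$.
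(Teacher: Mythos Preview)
Your proposal is sound and reaches the goal, but by a different and somewhat heavier route than the paper, and with one caveat.

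Both arguments begin by showing $0<\min u<u_0<\max u$ for nonconstant positive solutions, via the sign of $u''=-f(u)$ at the extrema. After that they diverge. For the key lower bound $\min u\ge\varepsilon>0$, the paper argues directly from continuous dependence, with no Hamiltonian analysis: if a sequence $u_n$ of solutions has $u_n(x_n)=\min u_n\to0$ (so $u_n'(x_n)=0$), then the initial data converge to $(0,0)$; since $u\equiv0$ is the unique solution of the ODE with these data, $u_n\to0$ uniformly on $[a,b]$, contradicting $\max u_n>u_0$. Your period-divergence argument is a valid alternative, and your concern about the degenerate case $f'(0)=0$ is overstated: since $f$ is $C^1$ with $f(0)=0$, one has $|f(s)|\le Cs$ near $0$, hence $E_n-F(u)=\int_{u_{n,-}}^{u}|f|\le\tfrac{C}{2}(u^2-u_{n,-}^2)$, and the period integral near $u_{n,-}$ already diverges like $\int_{u_{n,-}}^{\delta} du/\sqrt{u^2-u_{n,-}^2}$. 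No higher-order Taylor expansion is needed.

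The caveat is your upper bound on $\max u$, which you obtain only under the additional hypothesis that $F$ eventually becomes nonnegative---a hypothesis not present in the proposition. The paper sidesteps this entirely by taking initial data at the \emph{minimum}: once the lower bound is established, $(u_n(x_n),u_n'(x_n))\in[t_0,u_0]\times\{0\}$ lies in a fixed compact set, and continuous dependence on initial data then yields a $C^{\ell+1}$-convergent subsequence whose limit is a positive solution of the BVP. Thus the paper's argument is both simpler and proves the proposition exactly as stated; your energy-based bounds give more explicit control but require an assumption the statement does not supply.
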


\begin{proof}
The boundary conditions (and the fact that the equation is autonomous) imply that every solution to \eqref{eq:bvp2} can be extended to a periodic function on $\mathds R$ with period $b-a$; such an extension satisfies the differential equation in \eqref{eq:bvp2} on $\mathds R$.
The boundary value problem has exactly two constant nonnegative solutions, given by $u\equiv0$ and $u\equiv u_0$. Every other positive solution $u$ of \eqref{eq:bvp2} must satisfy:
\begin{equation}\label{eq:passthroughu0}
0<\min_{[a,b]} u< u_0<\max_{[a,b]}u.
\end{equation}

Namely, by periodicity, every such solution $u$ must have a minimum and a maximum point in $[a,b]$, where $u'=0$. Neither the maximum nor the minimum of $u$ can be equal to $u_0$, for otherwise $u\equiv u_0$. Now, the minimum must occur at some point where $u''=-f(u)\ge0$, and this implies that the minimum of $u$ is in $\left(0,u_0\right)$. Similarly, the maximum of $u$ must occur at some point where $u''=-f(u)\le0$, and therefore the maximum of $u$ is greater than $u_0$.

We now claim that there exists a positive constant $t_0>0$ such that every positive solution to \eqref{eq:bvp2} has minimum which is greater than or equal to $t_0$. Namely, if this were not the case, there would exist a sequence $(u_n)_n$ of positive solutions to \eqref{eq:bvp2}, and a sequence $(x_n)_n$ in $[a,b]$ such that $u_n'(x_n)=0$ and $\lim\limits_{n\to\infty}u_n(x_n)=0$. By the continuous dependence of the solution to an ODE on the initial data, this would imply that $u_n$ tends to $0$ uniformly on $[a,b]$, which contradicts \eqref{eq:passthroughu0}. This proves our claim about the existence of $t_0$.
 
From this, we first deduce that the set of positive solutions to \eqref{eq:bvp2}, which is contained in $C^{\ell+1}\big([a,b],\mathds R\big)$, is closed in the $C^1$-topology.
Using again the continuous dependence on the initial data, if $(u_n)_n$ is a sequence of positive solutions to \eqref{eq:bvp2}, and $(x_n)_n$ is a sequence in $[a,b]$ such that $\min_{[a,b]}u_n=u_n(x_n)$ for all $n$, then (up to subsequences) we may assume that $\lim_{n\to\infty}x_n=x_*\in[a,b]$ and that $\lim_{n\to\infty}u_n(x_n)=t_*\in[t_0,u_0]$, and therefore $u_n$ is $C^{\ell+1}$-convergent to the solution to the initial value problem
\[u''+f(u)=0,\quad u(x_*)=t_*,\quad u'(x_*)=0.\]
This solution must be defined on all of $[a,b]$; and, since the set of positive solutions to \eqref{eq:bvp2} is closed, it follows that $u$ is a positive solution to \eqref{eq:bvp2}.
\end{proof}

The proof of Proposition~\ref{prop:compactness} reveals the following regarding solutions to \eqref{eq:bvp2}:

\begin{corollary}\label{thm:qualitative}
Under the assumptions of Proposition~\ref{prop:compactness}, we have that:
\begin{itemize}
\item the only constant solutions to \eqref{eq:bvp2} are $u\equiv0$ and $u\equiv u_0$;
\item given any positive nonconstant periodic solution $u$ of \eqref{eq:bvp2}, the function $u-u_0$ has at least one zero in $[a,b]$, and all its zeros are simple.
\end{itemize}
\end{corollary}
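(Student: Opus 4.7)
The plan is to read off both bullets directly from ingredients already established in the proof of Proposition~\ref{prop:compactness}, together with one application of the uniqueness theorem for initial value problems.

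For the first bullet, I would argue as follows. Suppose $u\equiv c$ is a constant solution to \eqref{eq:bvp2}. Then $u''\equiv 0$ forces $f(c)=0$. By the hypotheses on $f$ (namely $f<0$ on $(0,u_0)$ and $f>0$ on $(u_0,+\infty)$), the only zeros of $f$ in $[0,+\infty)$ are $0$ and $u_0$, so $c\in\{0,u_0\}$. This is essentially an observation already used at the start of the proof of Proposition~\ref{prop:compactness}.

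For the second bullet, existence of a zero of $u-u_0$ is immediate from the estimate
\[0<\min_{[a,b]} u<u_0<\max_{[a,b]} u\]
that was proved in \eqref{eq:passthroughu0}, combined with the intermediate value theorem applied to the continuous function $u-u_0$. So any positive nonconstant periodic solution must cross the level $u_0$.

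The step requiring a little care, and the one I would flag as the only substantive point, is the simplicity of the zeros. Suppose $t_0\in[a,b]$ satisfies $u(t_0)=u_0$ and $u'(t_0)=0$. Since $f(u_0)=0$, the constant function $v\equiv u_0$ solves the initial value problem
\[v''+f(v)=0,\qquad v(t_0)=u_0,\qquad v'(t_0)=0.\]
But $u$ also solves this initial value problem with the same data at $t_0$. Uniqueness for the IVP (which applies because $f\in C^\ell$ with $\ell\ge 1$) forces $u\equiv u_0$, contradicting the assumption that $u$ is nonconstant. Hence $u'(t_0)\ne 0$ whenever $u(t_0)=u_0$, i.e.\ every zero of $u-u_0$ is simple. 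This completes the proof sketch, and no genuine obstacle remains beyond invoking IVP uniqueness at the right moment.
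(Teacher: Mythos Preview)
Your proof is correct and follows essentially the same approach as the paper's own proof: both bullets are read off from the observations in the proof of Proposition~\ref{prop:compactness}, with simplicity of zeros coming from the uniqueness theorem for the initial value problem at a point where $u(t_0)=u_0$ and $u'(t_0)=0$. Your slightly more careful phrasing (restricting to zeros of $f$ in $[0,+\infty)$) is, if anything, a minor improvement.
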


\begin{proof}
The first statement is trivial, since $f$ has exactly two zeros. As to the second statement, from \eqref{eq:passthroughu0} it follows that, given any positive nonconstant solution $u$ to \eqref{eq:bvp2}, the function $u-u_0$ must vanish somewhere in $[a,b]$. Any zero $t_0\in[a,b]$ of $u-u_0$ must be simple, for otherwise, by the uniqueness of the solution to the initial value problem $u''+f(u)=0$, $u(t_0)=u_0$, $u'(t_0)=0$, it would be $u\equiv u_0$.
\end{proof}
\end{section}

\begin{section}{Regularity and Positivity}
Recall we are interested in positive periodic solutions to \eqref{eq:bifODE}; more specifically, in branches of periodic solutions that issue from the positive constant solution.
Our first result is to show that, along one such bifurcation branch, all solutions remain positive. 
In order to give a precise statement, consider the sets
\begin{equation*}
\mathcal E:=\Big\{(\mu,u)\in \left(0,+\infty\right)\times C^2\big([a,b],\mathds R\big):u''-\mu\big(u-\vert u\vert^{q-1}u)=0\Big\},
\end{equation*} 
and
\begin{equation*}
\mathcal E':=\big\{(\mu,u)\in\mathcal E:u'(a)=u'(b)\big\}.
\end{equation*}
Clearly, $\mathcal E$ is $C^0$-closed and $\mathcal E'$ is $C^1$-closed in $\left(0,+\infty\right)\times C^2\big([a,b],\mathds R\big)$.

\begin{proposition}\label{thm:positivityinabranch}
Let $\mathcal C\subset\mathcal E'$ be a connected and closed subset of $\mathcal E'$ relatively to the $C^0$-topology, and assume the existence of $(\mu_*,u_*)\in\mathcal C$ such that $u_*>0$ on $[a,b]$. Then, for all $(\mu,u)\in\mathcal C$, $u>0$ on $[a,b]$. 
\end{proposition}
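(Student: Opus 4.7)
The plan is a standard connectedness argument. I would define
\[
\mathcal{C}^+ := \{(\mu,u) \in \mathcal{C} : u(t) > 0 \text{ for all } t \in [a,b]\},
\]
which contains $(\mu_*,u_*)$ by hypothesis and is open in $\mathcal{C}$ in the $C^0$-topology, since strict positivity on the compact interval $[a,b]$ is an open condition. Because $\mathcal{C}$ is connected, it will then suffice to prove that $\mathcal{C}^+$ is also closed in $\mathcal{C}$, which forces $\mathcal{C}^+=\mathcal{C}$.

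For closedness I would take a sequence $(\mu_n,u_n)\in\mathcal{C}^+$ converging in $C^0$ to $(\mu_\infty,u_\infty)\in\mathcal{C}$. Continuous dependence on initial data and parameters, applied on the fixed compact interval $[a,b]$, upgrades this to $C^2$-convergence; thus $u_\infty$ is itself a $C^2$-solution for parameter $\mu_\infty$ with $u_\infty\ge 0$. Assuming for contradiction $u_\infty(t_0)=0$ at some $t_0\in[a,b]$, the first goal is to show $u_\infty\equiv 0$. Since the nonlinearity $v\mapsto \mu_\infty(v-|v|^{q-1}v)$ is of class $C^1$ and hence locally Lipschitz, IVP uniqueness holds; the data $u_\infty(t_0)=u_\infty'(t_0)=0$ therefore force $u_\infty\equiv 0$. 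This is immediate for $t_0\in(a,b)$, since $t_0$ is then an interior minimum of $u_\infty\ge 0$. For $t_0\in\{a,b\}$ I would combine the appropriate one-sided sign of $u_\infty'(t_0)$ with the boundary identity $u_\infty'(a)=u_\infty'(b)$ and conservation of the first integral $E=\tfrac12(u')^2-G(u)$, where $G(v):=\mu_\infty\bigl(\tfrac{v^2}{2}-\tfrac{|v|^{q+1}}{q+1}\bigr)$, to reduce again to the interior situation.

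Once $u_\infty\equiv 0$ is established, the contradiction is elementary: by $C^2$-convergence one has $0<u_n<1$ uniformly on $[a,b]$ for $n$ large, so the ODE yields $u_n''=\mu_n u_n(1-u_n^{q-1})>0$ pointwise, whence
\[
0<\int_a^b u_n''\,\dd t = u_n'(b)-u_n'(a)=0
\]
by the $\mathcal{E}'$-condition, which is absurd. The step I expect to be the main obstacle is the boundary-minimum subcase $t_0\in\{a,b\}$ with $u_\infty'(t_0)\ne 0$: the energy identity only directly forces the value of $u_\infty$ at the opposite endpoint to lie in $\{0,u^*\}$ with $u^*=((q+1)/2)^{1/(q-1)}$, and ruling out the ``separatrix-like'' profile joining $0$ to $u^*$ will require in addition exploiting that the approximating $u_n\in\mathcal{C}^+$ are connected within $\mathcal{C}$ to $u_*$, so that the orbit cannot degenerate to the homoclinic configuration.
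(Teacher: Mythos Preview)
Your strategy coincides with the paper's: define $\mathcal C^+$, observe it is nonempty and $C^0$-open in $\mathcal C$, then prove it is closed in $\mathcal C$ by taking a sequence $(\mu_n,u_n)\to(\mu_\infty,u_\infty)$, arguing that a zero of the nonnegative limit $u_\infty$ forces $u_\infty\equiv 0$ by IVP uniqueness, and deriving a contradiction from the sign of $u_n''$ together with $u_n'(a)=u_n'(b)$. Your final contradiction (via $u_n''>0$ and $\int_a^b u_n''=u_n'(b)-u_n'(a)=0$) is exactly the paper's argument, stated with the correct sign; the paper in fact writes $u_n''<0$ by a harmless sign slip but draws the same conclusion that $u_n'$ is strictly monotone.

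The only substantive difference is the step $u_\infty(t_0)=0\Rightarrow u_\infty'(t_0)=0$. The paper asserts this in one line without separating interior from boundary points; you correctly flag the endpoint case as not automatic and bring in the first integral $E=\tfrac12(u')^2-G(u)$. Your proposed handling of the residual ``separatrix-like'' profile (with $u_\infty(a)=0$, $u_\infty(b)=u^*$) by ``exploiting that the approximating $u_n$ are connected within $\mathcal C$ to $u_*$'' is not a concrete argument, and it is hard to see how connectedness---already spent on the open/closed dichotomy---would exclude such a profile. In the paper's intended application this subtlety evaporates because the relevant branches sit inside $C^2_{\mathrm{per}}$, so $u_\infty(a)=u_\infty(b)$: if $u_\infty(a)=0$ then also $u_\infty(b)=0$, the one-sided inequalities $u_\infty'(a)\ge 0\ge u_\infty'(b)$ combined with $u_\infty'(a)=u_\infty'(b)$ give $u_\infty'(a)=0$, and uniqueness finishes. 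That periodicity observation is a cleaner fix than the phase-plane route you sketch.
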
 

\begin{proof}
Set $\widetilde{\mathcal C}=\big\{(\mu,u)\in\mathcal E':u>0\ \text{on}\ [a,b]\big\}$. Clearly, $\widetilde{\mathcal C}$ is $C^0$-open in $\mathcal E'$, and $\widetilde{\mathcal C}\ne\emptyset$. In order to show that $\widetilde{\mathcal C}=\mathcal C$, it suffices to show $\widetilde{\mathcal C}$ is $C^0$-closed in $\mathcal E'$. Assume $(\mu_n,u_n)_n$ is a sequence in $\widetilde{\mathcal C}$ which $C^0$-converges to $(\mu_\infty,u_\infty)\in\mathcal C$. Then, $u_\infty\ge0$ on $[a,b]$ and if $u_\infty$ vanishes at some $t_0\in[a,b]$, then also $u'_\infty(t_0)=0$, and so, by uniqueness, $u_\infty\equiv0$. Thus, we have two possibilities: either $u_\infty>0$ on $[a,b]$, or $u_\infty\equiv0$; let us show that this second case does not occur. If $u_n$ is uniformly close to $0$, and positive, since $\mu_n$ is close to $\mu_\infty>0$, then $u_n''=\mu_n(u_n-u_n^q)<0$ on $[a,b]$ for $n$ sufficiently large. This implies that $u_n'$ is strictly decreasing on $[a,b]$, and so $u'(a)>u'(b)$, which gives a contradiction. Hence $u_\infty>0$, and this proves that $\widetilde{\mathcal C}$ is closed in $\mathcal C$, concluding the proof.
\end{proof}
The result of Proposition~\ref{thm:positivityinabranch} will be invoked in the proof of Proposition~\ref{thm:globbif}, when it will be necessary to show that periodic solutions to \eqref{eq:bifODE} that bifurcate from the the trivial (positive) solution $u\equiv1$ remain positive.\smallskip
 
Finally, let  us observe that every $C^2$-solution to \eqref{eq:bifODE} is $C^\infty$, and that all the $C^\ell$-topologies coincide in the space of periodic solutions to \eqref{eq:bifODE}. We will implicitly assume throughout that the set of solutions to \eqref{eq:bifODE} is endowed with such topology.
\end{section}

\section{A uniqueness result}\label{sec:uniqueness}
Let us now consider the ODE
\begin{equation}\label{eq:slightlymoregeneral}
u''=\mu(u-u^q),
\end{equation}
with $\mu>0$ and $q>1$, which fits the setup of Proposition~\ref{prop:compactness} and Corollary~\ref{thm:qualitative}. We know from Proposition~\ref{prop:compactness} that, for all $T>0$, the set of all $T$-periodic positive solutions to \eqref{eq:slightlymoregeneral} is compact. Closely following the ingenious  arguments of Licois and V\'eron~\cite[Sec.~2]{LicVer98}, we now prove that, if $\mu>0$ is small enough, then the constant function $u\equiv1$ is the unique positive $T$-periodic solution to \eqref{eq:slightlymoregeneral}.

Without loss of generality, we will consider the case $T=2\pi$.
Namely, if $u$ is a $T$-periodic solution to \eqref{eq:slightlymoregeneral}, then $\overline u(t)=u\big(\frac T{2\pi}t\big)$ is a $2\pi$-periodic solution to
\[\overline u''=\tfrac{T^2}{4\pi^2}\mu(\overline u-\overline u^q).\]
Furthermore, note that the map $u\mapsto\mu^\frac1{q-1}\cdot u$ gives a bijection from the set of solutions to \eqref{eq:slightlymoregeneral} to the set of solutions to the equation
\begin{equation}\label{eq:slightlymoregeneral1varepsilon}
u''-\mu\,u+u^q=0,
\end{equation}
so we may (and will) work with \eqref{eq:slightlymoregeneral1varepsilon} instead of \eqref{eq:slightlymoregeneral} to prove the uniqueness~result.

Let us assume that $u$ is a positive $2\pi$-periodic solution to \eqref{eq:slightlymoregeneral1varepsilon}; the periodicity assumption will be used to eliminate all the boundary terms in the integrations by part in the computations below. Let us set
\begin{equation*}
v=u^{-1},
\end{equation*}
and observe that the corresponding equation satisfied by $v$ is:
\begin{equation}\label{eq:2.8}
-v''+2\frac{(v')^2}v+v^{2-q}-\mu\,v=0.
\end{equation}
Multiplying \eqref{eq:2.8} by $v^{-4}(v')^2$, and integrating on $[0,2\pi]$, we get:
\begin{multline}\label{eq:2.16}
\int_0^{2\pi} v''\,v^{-4}(v')^2\,\mathrm dt=\int_0^{2\pi}\big[2v^{-5}(v')^4+(v')^2v^{-2-q}-\mu\,(v')^2\,v^{-3}\big]\,\mathrm dt.
\end{multline}
We now multiply \eqref{eq:2.8} by $v^{-3} v''$ and integrate on $[0,2\pi]$, obtaining:
\begin{align}\nonumber
\int_0^{2\pi}v^{-3}(v'')^2\,\mathrm dt&=\int_0^{2\pi}\Big[2v^{-4}v''(v')^2 +v''\big(v^{-1-q}-\mu\,v^{-2}\big)\Big]\,\mathrm dt\\ \label{eq:2.17}
&=\int_0^{2\pi}\Big[2v^{-4}v''(v')^2-v'\left(-(q+1)v'v^{-q-2}+2\mu v'v^{-3}\right)\Big]
\,\mathrm dt\\\nonumber
&=\int_0^{2\pi}\Big[2v^{-4}v''(v')^2+(q+1)(v')^2v^{-q-2}-2\mu(v')^2v^{-3}\Big]
\,\mathrm dt.
\end{align}
We will now eliminate the term $\int_0^{2\pi}v^{-q-2}(v')^2\,\mathrm dt$
from \eqref{eq:2.16} and \eqref{eq:2.17}, obtaining:
\begin{multline}\label{eq:2.18}
\mu(q-1)\int_0^{2\pi}v^{-3}(v')^2\,\mathrm dt=\\
-(q+3)\int_0^{2\pi}v^{-4}v''(v')^2\,\mathrm dt+\int_0^{2\pi}v^{-3}(v'')^2\,\mathrm dt+
2(q+1)\int_0^{2\pi}v^{-5}(v')^4\,\mathrm dt.
\end{multline}
An immediate calculation gives
\[\left[\big(v^{-\frac{1}2}\big)''\right]^2=\frac{9}{16}v^{-5}(v')^4+\frac14(v'')^2v^{-3}-\frac34v^{-4}(v')^2v'',\]
so
\begin{equation}\label{eq:2.20}
v^{-4}(v')^2v''=-\frac43\left[\big(v^{-\frac{1}2}\big)''\right]^2+\frac34v^{-5}(v')^4+\frac13 v^{-3}(v'')^2.
\end{equation}
Substituting \eqref{eq:2.20} in \eqref{eq:2.18} gives:
\begin{multline}\label{eq:2.22}
\mu(q-1)\int_0^{2\pi}v^{-3}(v')^2\,\mathrm dt=\\-\frac q3\int_0^{2\pi}v^{-3}(v'')^2\,\mathrm dt+\frac{4}{3}(q+3)\int_0^{2\pi}\left[\big(v^{-\frac{1}2}\big)''\right]^2\,\mathrm dt+\frac14(5q-1)\int_0^{2\pi}v^{-5}(v')^4\,\mathrm dt.
\end{multline}
Integration by parts easily yields
\begin{equation}\label{eq:2.19}
3\int_0^{2\pi}v^{-4}(v')^2v''\,\mathrm dt=4\int_0^{2\pi}v^{-5}(v')^4\,\mathrm dt,
\end{equation}
and, substituting \eqref{eq:2.20} in \eqref{eq:2.19} gives:
\begin{equation}\label{eq:2.23}
\int_0^{2\pi}v^{-3}(v'')^2\,\mathrm dt=
\frac{7}{4}\int_0^{2\pi}v^{-5}(v')^4\,\mathrm dt+4\int_0^{2\pi}\left[\big(v^{-\frac12}\big)''\right]^2\,\mathrm dt.
\end{equation}
Finally, substituting \eqref{eq:2.23} into \eqref{eq:2.22} gives (cf.~\cite[(2.9)]{LicVer98}):
\begin{multline}\label{eq:2.9}
\mu(q-1)\int_0^{2\pi}v^{-3}(v')^2\,\mathrm dt=\\
\left(\frac23q-\frac14\right)\int_0^{2\pi}v^{-5}(v')^4\,\mathrm dt+4\int_0^{2\pi}\left[\big(v^{-\frac12}\big)''\right]^2\,\mathrm dt.
\end{multline}
Now, if $f$ is a $2\pi$-periodic function of class $C^2$, then $f'$ is $L^2$-orthogonal to the space of constant functions on $[0,2\pi]$, so Wirtinger's inequality yields:
\begin{equation}\label{eq:poincare}
\int_0^{2\pi}\big\vert f'(t)\big\vert^2\,\mathrm dt\le\int_0^{2\pi}\big\vert f''(t)\big\vert^2\,\mathrm dt.
\end{equation}
Applying \eqref{eq:poincare} to $f=v^{-\frac12}$, we obtain:
\begin{equation}\label{eq:2.24}
4\int_0^{2\pi}\left[\big(v^{-\frac12}\big)''\right]^2\,\mathrm dt\ge \int_0^{2\pi}v^{-3}(v')^2\,\mathrm dt.
\end{equation}
We are now ready to prove the claimed uniqueness result:

\begin{theorem}\label{thm:uniqepsismall}
For all $q>1$, if $\mu\in\left(0,\frac1{q-1}\right]$, then equation \eqref{eq:slightlymoregeneral} has a unique positive $2\pi$-periodic solution, given by the constant function $u\equiv1$.
\end{theorem}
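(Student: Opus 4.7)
The plan is to combine the integral identity \eqref{eq:2.9} with the Wirtinger-type inequality \eqref{eq:2.24} into a single rigidity statement that forces any positive periodic solution to be constant in the regime $\mu\le 1/(q-1)$.

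First, I would invoke the scaling bijection $u\mapsto\mu^{1/(q-1)}\cdot u$ recorded just before \eqref{eq:slightlymoregeneral1varepsilon}, so that it suffices to show that the only positive $2\pi$-periodic solution of \eqref{eq:slightlymoregeneral1varepsilon} is the positive constant $u\equiv\mu^{1/(q-1)}$. Let $u$ be such a solution and set $v=u^{-1}>0$, so that $v$ is $C^2$ and $2\pi$-periodic, and all computations leading to \eqref{eq:2.9} and \eqref{eq:2.24} apply verbatim.

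Substituting \eqref{eq:2.24} directly into the last term on the right-hand side of \eqref{eq:2.9} and rearranging yields
\[
\bigl(\mu(q-1)-1\bigr)\int_0^{2\pi} v^{-3}(v')^2\,\dd t \;\geq\; \left(\frac{2q}{3}-\frac14\right)\int_0^{2\pi} v^{-5}(v')^4\,\dd t.
\]
The hypothesis $\mu(q-1)\leq 1$ makes the left-hand side non-positive (the remaining integral being manifestly non-negative since $v>0$). On the other hand, for $q>1$ the coefficient $\frac{2q}{3}-\frac14 > \frac{5}{12}$ is strictly positive, and $v^{-5}(v')^4\geq 0$, so the right-hand side is non-negative. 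Both sides must therefore vanish; in particular the right-hand integral is zero, forcing $v'\equiv 0$.

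Thus $v$, and hence $u$, is a positive constant, which for \eqref{eq:slightlymoregeneral1varepsilon} means $u\equiv\mu^{1/(q-1)}$; pulling back through the scaling bijection gives $u\equiv 1$ as the unique positive $2\pi$-periodic solution of \eqref{eq:slightlymoregeneral}, as claimed. There is essentially no obstacle beyond sign-bookkeeping at this stage, because all the delicate analysis has already been carried out in the Licois--Véron-style derivation of \eqref{eq:2.9}; the value $\mu=1/(q-1)$ appears as the sharp threshold precisely because Wirtinger's inequality \eqref{eq:2.24} is the only place where an inequality (rather than an equality) is used.
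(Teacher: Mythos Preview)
Your argument is correct and follows essentially the same route as the paper: both combine the identity \eqref{eq:2.9} with the Wirtinger inequality \eqref{eq:2.24} and exploit the sign of $\mu(q-1)-1$. The only cosmetic difference is that the paper argues by contradiction (assuming $u$ nonconstant so that all integrals are strictly positive and deriving a strict inequality chain), whereas you rearrange into a single inequality whose two sides are forced to vanish; the underlying mechanism is identical.
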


\begin{proof}
As explained in the beginning of the section, the desired conclusion is equivalent to showing that $u\equiv\mu^{\frac1{q-1}}$ is the unique positive $2\pi$-periodic solution to equation \eqref{eq:slightlymoregeneral1varepsilon}.
Assume that $\mu\le\frac1{q-1}$ and that $u$ is a positive $2\pi$-periodic solution to \eqref{eq:slightlymoregeneral1varepsilon}. If $u$ is nonconstant, then all three integrals appearing in \eqref{eq:2.9} are positive. Thus, from $\mu(q-1)\leq 1$, \eqref{eq:2.9}  and \eqref{eq:2.24}, we obtain:
\begin{multline*}\int_0^{2\pi}v^{-3}(v')^2\,\mathrm dt \ge\mu(q-1)\int_0^{2\pi}v^{-3}(v')^2\,\mathrm dt\\ \stackrel{\eqref{eq:2.9}}>4\int_0^{2\pi}\left[\big(v^{-\frac{1}2}\big)''\right]^2\,\mathrm dt\stackrel{\eqref{eq:2.24}}\ge
\int_0^{2\pi}v^{-3}(v')^2\,\mathrm dt,
\end{multline*}
which gives a contradiction, completing the proof.
\end{proof}

\begin{remark}
Theorem~\ref{thm:uniqepsismall} is sharp; as we will see, equation \eqref{eq:slightlymoregeneral} has multiple positive $2\pi$-periodic solutions if $\mu>\frac1{q-1}$.
\end{remark}

\begin{section}{Bifurcation}
\label{sec:globalbif}

In this section, we prove \Cref{mainthm:A} in the Introduction, by establishing a global bifurcation result for positive periodic solutions to \eqref{eq:bifODE}. Fix $q>1$, $T>0$, and, as we are dealing with positive solutions to \eqref{eq:bifODE}, consider the nonlinear problem:
\begin{equation}\label{eq:problemOlambdaT}
\mathbf P_{\mu}:\qquad \left\{\begin{aligned}&u''-\mu \, u+\mu\,  u^q=0,\\
&u(0)=u(T),\quad u'(0)=u'(T),\\
& u>0\ \text{ on }\ [0,T],
\end{aligned}\right.\phantom{\mathbf P_{\mu}:\qquad }
\end{equation} 
where $\mu$ is a positive real parameter. 
Note that the constant function
\[u_0\equiv1\]
solves $\mathbf P_{\mu}$ for all $\mu$, and that every positive solution to $\mathbf P_{\mu}$ admits a $T$-periodic smooth extension to $\mathds R$.

\subsection{A priori bounds}
We know by Proposition~\ref{prop:compactness} that, for all $\mu>0$ fixed, the solutions to $\mathbf P_{\mu}$ form a compact subset of $C^\ell\big([0,T],\mathds R\big)$ for all $\ell\ge0$. However, in order to apply global bifurcation results, we need a compactness property that is \emph{locally uniform with respect to $\mu$}. This could be obtained with a refinement of the proof of Proposition~\ref{prop:compactness}, but we present a different argument (using 
phase portraits) which is specific to problem $\mathbf P_{\mu}$  and inspired by observations of Schoen~\cite{Schoen89}.

Looking closely at the ODE in problem $\mathbf P_{\mu}$, which is the same as \eqref{eq:slightlymoregeneral}, namely
\begin{equation}\label{eq:eqOlambdaT}
u''-\mu\,u+\mu\,u^q=0,
\end{equation}
one realizes that $u(t)=A\cdot\cosh(B\,t)^C$ is a (positive) solution for a suitable choice of coefficients $A>1$, $B>0$, and $C<0$. Namely, such $u(t)$ solves \eqref{eq:eqOlambdaT} setting
\[\textstyle  A_q=\left(\tfrac{q+1}2\right)^\frac1{q-1},\quad B_{q,\mu}=\frac{q-1}2\,\sqrt\mu,\quad C_q=-\frac2{q-1}.\]
We denote this solution by
\begin{equation}\label{eq:upepsilon}
\phantom{,\quad t\in\mathds R.}
u_{q,\mu}(t):=A_q\,\cosh(B_{q,\mu}\,t)^{C_q},\quad t\in\mathds R,
\end{equation}
and observe that $1<A_q<\sqrt{e}$ is uniformly bounded for all $q>1$. 
The path $\mathds R\ni t\mapsto\big(u_{q,\mu}(t),u'_{q,\mu}(t)\big)\in\mathds R^2$ in phase space, together with the point $(0,0)$, which belongs to its closure, bounds a compact subset $\Omega_{q,\mu}$ of $\mathds R^2$ that contains the point $(1,0)$ and is invariant by the flow of \eqref{eq:eqOlambdaT}, see Figure~\ref{fig:omegaqepsilon}. 
\begin{figure}[htbp]
\vspace{-.3cm}
\centering
\includegraphics[width=0.8\columnwidth]{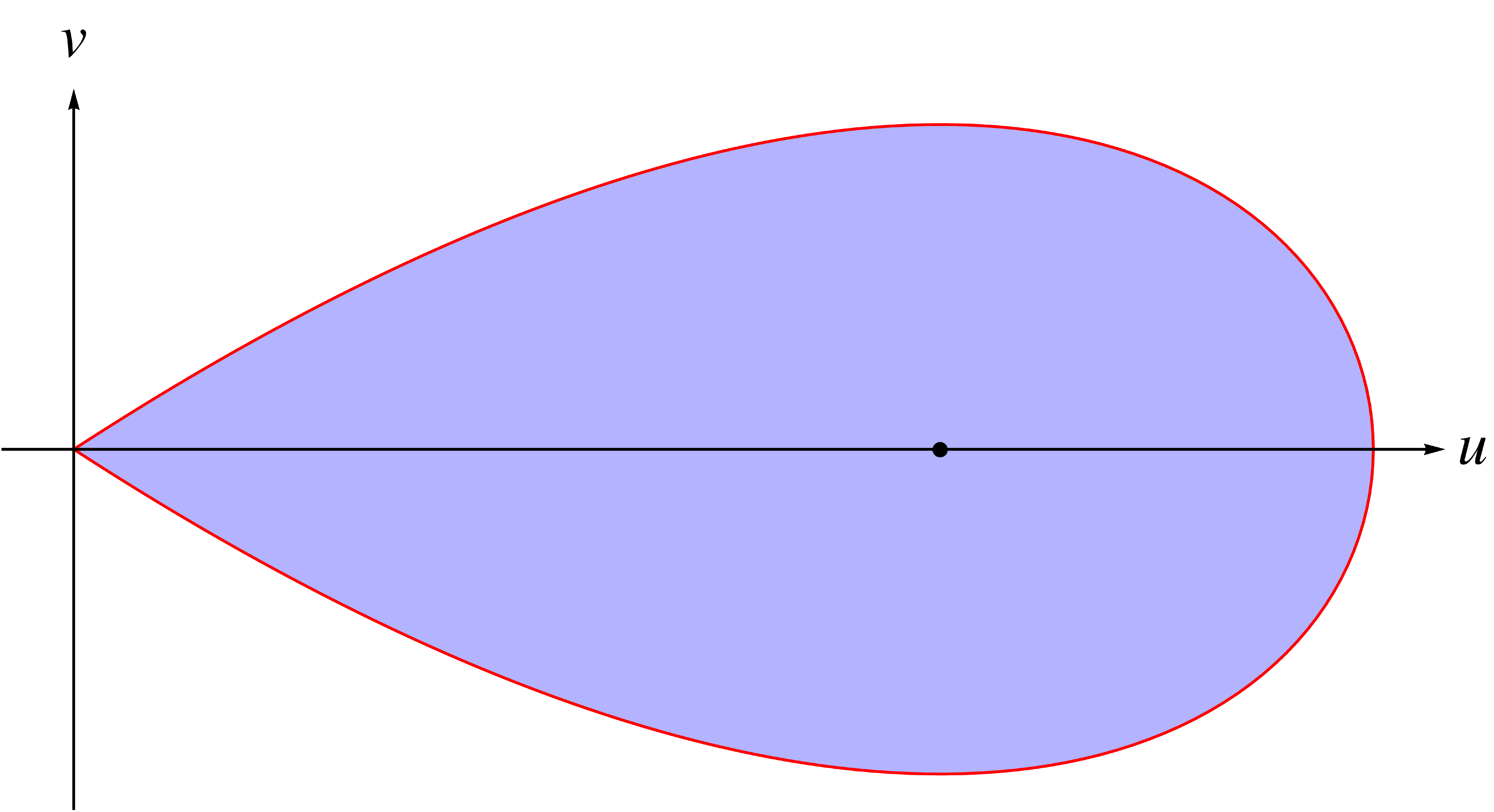}
\caption{The region $\Omega_{q,\mu}$, containing the point $(1,0)$.} 
\label{fig:omegaqepsilon}
\vspace{-.3cm}
\end{figure} 

\begin{proposition}\label{thm:uniformbounds}
For all $q>1$ and all $\mu>0$, every positive periodic solution to \eqref{eq:eqOlambdaT} defines a path in phase space which is contained in the interior of $\Omega_{q,\mu}$. In particular, the $C^0$-norm of every positive periodic solution to \eqref{eq:eqOlambdaT} admits a uniform bound independent of $\mu$.
\end{proposition}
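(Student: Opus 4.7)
The plan is to exploit the Hamiltonian structure of \eqref{eq:eqOlambdaT}. Writing it as the first-order system $u'=v$, $v'=\mu(u-u^q)$ on the phase plane, a direct check shows that the energy
\[
H(u,v) \,:=\, \tfrac12 v^2 + V(u), \qquad V(u) \,:=\, -\tfrac{\mu}{2} u^2 + \tfrac{\mu}{q+1}u^{q+1},
\]
is constant along any trajectory. On $\{u\geq 0\}$, the potential $V$ has $V(0)=0$ as a local maximum (since $V''(0)=-\mu$), a strict global minimum $V(1)=-\tfrac{\mu(q-1)}{2(q+1)}$ at $u=1$, vanishes again at $u=A_q$ (using $A_q^{q-1}=(q+1)/2$), and diverges to $+\infty$ as $u\to +\infty$.

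Next, I would verify that the explicit solution $u_{q,\mu}$ from \eqref{eq:upepsilon} lies on the zero level set: indeed $u_{q,\mu}(0)=A_q$ and $u_{q,\mu}'(0)=0$ yield $H(u_{q,\mu}, u_{q,\mu}')\equiv V(A_q)=0$. Solving $v^2=-2V(u)$ then parametrizes $\partial\Omega_{q,\mu}$ in $\{u\geq 0\}$ as the closed curve consisting of the point $(0,0)$ together with the two branches of the homoclinic orbit joining it to $(A_q,0)$; equivalently, $\Omega_{q,\mu}$ is the connected component of $\{H\leq 0\}\cap\{u\geq 0\}$ containing the equilibrium $(1,0)$. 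Let $u$ now be any positive $T$-periodic solution to \eqref{eq:eqOlambdaT} and set $c:=H(u(t),u'(t))$, which is constant; the corresponding orbit $\gamma=\{(u(t),u'(t))\}$ is a compact closed curve contained in the right half-plane $\{u>0\}$.

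The core of the argument is to rule out $c\geq 0$. If $c>0$, the identity $v^2=2(c-V(u))$ shows that the connected component of $\{H=c\}$ meeting $\{u>0\}$ crosses the axis $\{u=0\}$ transversally at $(0,\pm\sqrt{2c})$, so $\gamma$ would leave $\{u>0\}$ in finite time, contradicting positivity. If $c=0$, the connected components of $\{H=0\}$ meeting $\{u\geq 0\}$ are exactly the equilibrium $(0,0)$ and the homoclinic orbit through $(A_q,0)$; neither is a nonconstant positive \emph{periodic} orbit (the latter has infinite period). Hence $V(1)\leq c<0$, and $\gamma$ sits on a regular level set of $H$ inside the open set $\{H<0\}\cap\{u>0\}=\mathrm{int}(\Omega_{q,\mu})$, while the constant solution $u\equiv 1$ corresponds to the center $(1,0)\in\mathrm{int}(\Omega_{q,\mu})$. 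Along $\gamma$, the value $u(t)$ ranges in $[\alpha(c),\beta(c)]\subset(0,A_q)$, where $\alpha(c)<1<\beta(c)$ are the two positive roots of $V(u)=c$, which immediately gives $\|u\|_{C^0}<A_q<\sqrt{e}$ uniformly in $\mu$. The main obstacle is the case analysis eliminating $c\geq 0$; once the Hamiltonian bookkeeping is in place, the phase-portrait picture in Figure~\ref{fig:omegaqepsilon} does the rest.
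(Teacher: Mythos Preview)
Your proof is correct and takes a genuinely different route from the paper's. The paper argues via the Poincar\'e--Bendixson Theorem: any closed orbit of the planar field $\vec F(u,v)=(v,\mu(u-u^q))$ must enclose a fixed point, necessarily $(1,0)$; since $(1,0)\in\mathrm{int}(\Omega_{q,\mu})$ and distinct orbits cannot cross, the closed orbit can meet neither the homoclinic loop nor the saddle $(0,0)$ forming $\partial\Omega_{q,\mu}$, and is therefore trapped in $\mathrm{int}(\Omega_{q,\mu})$. You instead exploit the first integral $H$ directly, identifying $\Omega_{q,\mu}$ with the sublevel set $\{H\le 0\}\cap\{u\ge 0\}$ and eliminating the energy values $c\ge 0$ by a short case analysis. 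Your approach is more elementary in that it bypasses Poincar\'e--Bendixson entirely and is more quantitative (you extract the explicit range $[\alpha(c),\beta(c)]$ for $u$); the paper's argument is terser and more robust, since it would survive small non-Hamiltonian perturbations of the vector field for which no conserved energy is available. One minor stylistic point: in the case $c>0$, rather than saying ``$\gamma$ would leave $\{u>0\}$ in finite time,'' it is cleaner to observe that the connected component of $\{H=c\}\cap\{u>0\}$ is an open arc containing no equilibrium, hence cannot contain a closed orbit.
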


\begin{proof}
Trajectories in phase space of periodic solutions to \eqref{eq:eqOlambdaT} correspond to closed orbits of the planar vector field 
\[\vec F(u,v)=\big(v,\mu (u-u^q)\big).\] 
By the Poincar\'e--Bendixson Theorem, every closed orbit of $\vec F$ bounds a (compact) set which is invariant by the flow of $\vec F$, and which contains a fixed point of that flow, i.e., a zero of $\vec F$. 
Clearly, in the case of a positive periodic solution to \eqref{eq:eqOlambdaT}, such zero must be $(u_0,v_0)=(1,0)$. Every closed simple curve that bounds an open domain containing $(u_0,v_0)$ must intersect the interior of $\Omega_{q,\mu}$, and therefore it must be entirely contained in the interior of $\Omega_{q,\mu}$. This proves the first statement of the Proposition. For the second, note that the projection $\mathds R^2\ni(u,v)\mapsto u\in\mathds R$ maps $\Omega_{q,\mu}$ to the compact interval $\left[0,A_q\right]$, so that every positive periodic solution $u$ of \eqref{eq:eqOlambdaT} satisfies $\Vert u\Vert_\infty\le A_q$.
\end{proof}

We are now ready to state and prove the uniform compactness property needed to apply the global bifurcation result (Theorem~\ref{thm:rabinowitz}) to problem $\mathbf P_{\mu}$.

\begin{corollary}\label{thm:properprojection}
The restriction  to the subset $\big\{(\mu,u):u\ \text{is a solution to }\mathbf P_{\mu}\big\}$  of the projection $\Pi\colon \left(0,+\infty\right)\times C^\ell\big([0,T],\mathds R\big) \to \left(0,+\infty\right)$ is a proper map.
\end{corollary}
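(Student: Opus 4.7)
The plan is to verify properness via sequential compactness: given a compact $K \subset (0,+\infty)$, let
\[S_K = \big\{(\mu,u) : \mu \in K,\ u\ \text{solves}\ \mathbf P_\mu\big\},\]
and show that any sequence $(\mu_n,u_n)$ in $S_K$ admits a subsequence converging in $(0,+\infty)\times C^\ell\big([0,T],\mathds R\big)$ to some element of $S_K$. Passing to a subsequence, I may assume $\mu_n \to \mu_\infty \in K$. Proposition~\ref{thm:uniformbounds} gives $\Vert u_n\Vert_\infty \le A_q$ independently of $n$; since $\mu_n$ stays in $K$, the ODE $u_n''=\mu_n(u_n - u_n^q)$ then bounds $\Vert u_n''\Vert_\infty$ uniformly, and periodicity (via Rolle) forces $u_n'$ to vanish somewhere in $[0,T]$, so integration bounds $\Vert u_n'\Vert_\infty$ uniformly as well. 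By Arzel\`a--Ascoli, I may further assume $u_n \to u_\infty$ in $C^1\big([0,T],\mathds R\big)$; passing to the limit in the ODE shows $u_\infty \in C^2$, satisfies $u_\infty''=\mu_\infty(u_\infty - u_\infty^q)$, and inherits the periodic boundary conditions, while $u_\infty \ge 0$.

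The crux is to exclude $u_\infty \equiv 0$. The constant sequence $u_n \equiv 1$ converges trivially, so I may assume each $u_n$ is nonconstant; Corollary~\ref{thm:qualitative} (applied with $u_0=1$) then provides $t_n \in [0,T]$ with $u_n(t_n)=1$, and after passing to a further subsequence with $t_n \to t_\infty$, uniform convergence gives $u_\infty(t_\infty) = 1$. If $u_\infty$ vanished at some $s \in [0,T]$, nonnegativity would force $u_\infty'(s)=0$, and uniqueness of the Cauchy problem for $u''=\mu_\infty(u-u^q)$ with trivial data (the map $u\mapsto u-u^q$ is locally Lipschitz on $[0,\infty)$ since $q>1$) would imply $u_\infty \equiv 0$, contradicting $u_\infty(t_\infty)=1$. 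Hence $u_\infty > 0$ on $[0,T]$, so $(\mu_\infty,u_\infty) \in S_K$.

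Finally, positivity of $u_\infty$ combined with $C^1$-convergence supplies $\delta > 0$ such that $u_n \ge \delta$ on $[0,T]$ for all sufficiently large $n$, so each such $u_n$ takes values in $[\delta, A_q]$, on which $x \mapsto x^q$ is smooth. Differentiating the ODE in $t$ inductively and combining with the uniform $C^k$-bounds yields uniform $C^{k+2}$-bounds, so successive Arzel\`a--Ascoli extractions upgrade the convergence from $C^1$ to $C^\ell$. The principal obstacle in this argument is the previous paragraph: preventing degeneration to the trivial zero equilibrium, which is resolved by the qualitative crossing of the level $u=1$ given by Corollary~\ref{thm:qualitative}, together with uniqueness of the Cauchy problem at the equilibrium $(0,0)$.
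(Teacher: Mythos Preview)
Your proof is correct and follows essentially the same strategy as the paper: uniform $L^\infty$ bounds from Proposition~\ref{thm:uniformbounds}, bootstrapping via the ODE to higher $C^k$ bounds, and Arzel\`a--Ascoli. The paper's proof is terser---it asserts that $\Pi^{-1}([\mu_a,\mu_b])$ is ``clearly closed'' and bounded in $C^{\ell+2}$ without spelling out why limits remain strictly positive or why higher-order bootstrapping is legitimate when $q<2$---whereas you supply these details explicitly, using Corollary~\ref{thm:qualitative} to pin each $u_n$ to the level $u=1$ and thereby rule out degeneration to the zero equilibrium, and using the resulting uniform lower bound to justify differentiating $u^q$.
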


\begin{proof}
By Proposition~\ref{thm:uniformbounds}, the set ${\Pi}^{-1}\big(\left(0,+\infty\right)\big)$ is bounded in $L^\infty$. From~\eqref{eq:eqOlambdaT}, for any pair $0<\mu_a\le\mu_b$, the set ${\Pi}^{-1}\big(\left[\mu_a,\mu_b\right]\big)$ is bounded in $C^{\ell+2}\big([0,T],\mathds R)$. Such a set is clearly closed in the $C^\ell$-topology, for all $\ell\ge0$. By the Arzel\'a--Ascoli Theorem, ${\Pi}^{-1}\big(\left[\mu_a,\mu_b\right]\big)$ is compact in $\left(0,+\infty\right)\times C^\ell\big([0,T],\mathds R\big)$.
\end{proof}

\subsection{Parity}
Solutions to $\mathbf P_{\mu}$ are the positive solutions to \eqref{eq:bifODE} in the Banach space
\begin{equation}\label{eq:C2per}
C^2_{\mathrm{per}}\big([0,T],\mathds R\big):=\Big\{u\in C^2\big([0,T],\mathds R\big):u(0)=u(T),\ u'(0)=u'(T)\Big\}.
\end{equation}
Let us observe that any (positive) solution to \eqref{eq:bifODE} in $C^2_{\mathrm{per}}\big([0,T],\mathds R\big)$ admits a smooth, i.e., $C^\infty$, $T$-periodic extension to $\mathds R$. Such extension clearly solves \eqref{eq:bifODE} on the entire real line. With a slight abuse of notation, we will sometimes identify solutions to 
\eqref{eq:bifODE} in $C^2_{\mathrm{per}}\big([0,T],\mathds R\big)$ with their $T$-periodic extension to $\mathds R$.

Define a linear isometry $\mathcal I$ of \eqref{eq:C2per} by setting $\mathcal I(u):=u^-$, where:
\[\phantom{\qquad\forall\,t\in[a,b].}u^-(t)=u(T-t),\qquad\forall\,t\in[0,T].\]
Since \eqref{eq:eqOlambdaT} does not involve the first-order term $u'$, 
it is easy to see that the set of (positive) solutions to \eqref{eq:eqOlambdaT} is invariant under $\mathcal I$.
We are then led to consider the closed subspace of \eqref{eq:C2per} fixed by $\mathcal I$, which consists of functions in \eqref{eq:C2per} that are \emph{even} about the midpoint $T/2$ of the interval $[0,T]$. We denote this space by:
\begin{equation}\label{eq:C2pereven}
C^2_{\mathrm{per}}\big([0,T],\mathds R\big)^\mathrm{even}:=\Big\{u\in C^2_{\mathrm{per}}\big([0,T],\mathds R\big):u^-=u\Big\}.
\end{equation}
Note that the constant solution $u_0\equiv 1$ to \eqref{eq:eqOlambdaT} belongs to $C^2_{\mathrm{per}}\big([0,T],\mathds R\big)^\mathrm{even}$.

Recalling that the set of (positive) solutions to \eqref{eq:bifODE} is invariant by translations, we define two solutions to \eqref{eq:bifODE} to be \emph{equivalent} if they are obtained one from another by a translation.
An easy consequence of periodicity is the following:

\begin{lemma}\label{thm:equivtoeven}
Any (positive) solution to \eqref{eq:bifODE} in $C^2_{\mathrm{per}}\big([0,T],\mathds R\big)$ is equivalent to a (positive) solution to \eqref{eq:bifODE} in $C^2_{\mathrm{per}}\big([0,T],\mathds R\big)^\mathrm{even}$.
\end{lemma}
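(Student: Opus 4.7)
\medskip

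\noindent\textbf{Proof plan.} Let $u$ be a positive solution to \eqref{eq:bifODE} in $C^2_{\mathrm{per}}([0,T],\mathds R)$, and regard it as its $T$-periodic smooth extension to $\mathds R$. The plan is to find a translation $u(\cdot+c)$ that is symmetric about $T/2$, exploiting two separate symmetries of the ODE: autonomy (invariance under $t\mapsto t+c$) and the absence of a first-order term (invariance under $t\mapsto -t$).

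First, I would locate a critical point. Since $u$ is continuous and $T$-periodic, it attains its minimum at some $t_0\in\mathds R$, where necessarily $u'(t_0)=0$. Define
\[
v(t):=u\!\left(t+t_0-\tfrac{T}{2}\right), \qquad t\in\mathds R.
\]
Because the ODE in \eqref{eq:bifODE} is autonomous, $v$ is again a positive $T$-periodic solution; in particular it restricts to an element of $C^2_{\mathrm{per}}([0,T],\mathds R)$, and it is equivalent to $u$ in the sense of the lemma. By construction, $v'(T/2)=u'(t_0)=0$.

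Next, I would exploit the reflection symmetry. Set $w(t):=v(T-t)$. Since \eqref{eq:bifODE} contains no $u'$ term, $w''=v''(T-t)=\mu(v(T-t)-|v(T-t)|^{q-1}v(T-t))=\mu(w-|w|^{q-1}w)$, so $w$ solves \eqref{eq:bifODE} as well. Moreover,
\[
w(T/2)=v(T/2), \qquad w'(T/2)=-v'(T/2)=0=v'(T/2).
\]
By uniqueness of solutions to the initial value problem for \eqref{eq:bifODE} with data at $t=T/2$, we conclude $w\equiv v$, that is, $v(T-t)=v(t)$ for all $t$. In particular $v\in C^2_{\mathrm{per}}([0,T],\mathds R)^{\mathrm{even}}$, which completes the argument.

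There is no serious obstacle here: the only small point to be careful about is that the reflection is taken about $T/2$ rather than $0$, which is precisely why the translation by $t_0-T/2$ (and not simply by $t_0$) is used; this places the critical point at the center of the interval $[0,T]$ so that evenness of $v$ about $T/2$ is equivalent to membership in the space defined in \eqref{eq:C2pereven}.
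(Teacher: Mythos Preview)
Your proof is correct and follows essentially the same approach as the paper: translate so that a critical point sits at $T/2$, then use the reflection symmetry together with uniqueness for the IVP to conclude evenness. The only cosmetic difference is that the paper invokes Rolle's Theorem (from $u(0)=u(T)$) to find the critical point, whereas you use the minimum of the periodic function; both are fine.
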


\begin{proof}
Given any (positive) solution $u$ to \eqref{eq:bifODE} in $C^2_{\mathrm{per}}\big([0,T],\mathds R\big)$, Rolle's Theorem yields a $t_0\in[0,T]$ such that $u'(t_0)=0$. Set $v(t)=u(t-T/2+t_0)$, $t\in\mathds R$. Then, $v$ is a solution to \eqref{eq:bifODE} in  $C^2_{\mathrm{per}}\big([0,T],\mathds R\big)$ equivalent to $u$, which satisfies $v'(T/2)=0$. Since $v^-$ is a solution to \eqref{eq:bifODE} that satisfies $v^-(T/2)=v(T/2)$ and ${(v^-)}'(T/2)=0=v'(T/2)$, we have that $v^-=v$, so $v\in C^2_{\mathrm{per}}\big([0,T],\mathds R\big)^\mathrm{even}$.
\end{proof}

By Lemma~\ref{thm:equivtoeven}, it suffices to count pairwise nonequivalent positive solutions to \eqref{eq:bifODE} in $C^2_{\mathrm{per}}\big([0,T],\mathds R\big)^\mathrm{even}$ in order to establish the desired multiplicity result.
This reduction is key to use bifurcation from \emph{simple} eigenvalues (\Cref{thm:CraRab}).

\subsection{Linearization}\label{sub:linearization}
The linearization around the solution $u_0\equiv1$ of the problem $\mathbf P_{\mu}$, see \eqref{eq:problemOlambdaT}, gives the following elementary linear boundary value problem:
\begin{equation*}
\left\{\begin{aligned}
&v''+(q-1)\mu\, v=0,\\
&v(0)=v(T),\quad v'(0)=v'(T),
\end{aligned}\right.
\end{equation*}
which admits nontrivial solutions if and only if $\mu$ assumes one of the values
\begin{equation}\label{eq:deginstants}
\phantom{,\quad k\in\mathds N.}\mu_k=\dfrac{4\pi^2k^2}{T^2(q-1)},\quad k\in\mathds N.
\end{equation}
These are hence the \emph{degeneracy instants} of $\mathbf P_{\mu}$, away from which there is rigidity:

\begin{proposition}\label{thm:locrigidity}
For all $\mu_*\not\in\{\mu_k:k\in\mathds N\}$, the constant function $u_0\equiv 1$ is a \emph{locally rigid} solution to $\mathbf P_{\mu}$ near $\mu_*$, i.e., there exists a neighborhood $U$ of $(\mu_*,u_0)$ in $\left(0,+\infty\right)\times C^2_\mathrm{per}\big([0,T],\mathds R\big)$ such that if $(\mu,u)\in  U$ and $u$ is a solution to $\mathbf P_{\mu}$, then $u=u_0$.
\end{proposition}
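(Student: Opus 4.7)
The plan is to apply the Implicit Function Theorem to the smooth (in fact, real-analytic in $u$) map
\[
\mathcal{F}\colon (0,+\infty)\times C^2_{\mathrm{per}}\big([0,T],\mathds R\big) \longrightarrow C^0\big([0,T],\mathds R\big), \qquad \mathcal{F}(\mu,u):=u''-\mu\,u+\mu\,u^q,
\]
for which the trivial branch $\mathcal{F}(\mu,u_0)\equiv 0$ is manifest. The linearization at $(\mu_*,u_0)$ is $L:=\partial_u\mathcal{F}(\mu_*,u_0)$, acting as $Lv = v''+(q-1)\mu_*\,v$, which is precisely the operator identified in \Cref{sub:linearization}. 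The goal is to verify that $L$ is an isomorphism whenever $\mu_*\notin\{\mu_k:k\in\mathds N\}$, and then read off local rigidity from the resulting local uniqueness.

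The first step is to argue that $L\colon C^2_{\mathrm{per}}\to C^0$ is Fredholm of index zero. The pure second-derivative operator $D:=\mathrm d^2/\mathrm dt^2\colon C^2_{\mathrm{per}}\to C^0$ has one-dimensional kernel (the constants) and range $\{f\in C^0:\int_0^T f\,\mathrm dt=0\}$ of codimension one, hence is Fredholm of index zero. Multiplication by the constant $(q-1)\mu_*$ followed by the compact embedding $C^2\hookrightarrow C^0$ (Arzelà--Ascoli) is a compact operator, so $L=D+(q-1)\mu_*\,\mathrm{Id}$ remains Fredholm of index zero.

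The second step is to compute $\Ker(L)$. Writing $\omega=\sqrt{(q-1)\mu_*}$, the general solution to $v''+\omega^2 v=0$ is $v(t)=A\cos(\omega t)+B\sin(\omega t)$, and the periodic boundary conditions $v(0)=v(T)$, $v'(0)=v'(T)$ are met (for some $(A,B)\neq(0,0)$) exactly when $\omega T\in 2\pi\mathds Z$, i.e., when $\mu_*=\mu_k$ as in \eqref{eq:deginstants}. For $\mu_*\notin\{\mu_k\}$ the kernel is therefore trivial, and combined with the index-zero Fredholm property this forces $L$ to be a topological isomorphism.

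The Implicit Function Theorem now yields neighborhoods $V\subset(0,+\infty)$ of $\mu_*$ and $W\subset C^2_{\mathrm{per}}$ of $u_0$, together with a $C^\ell$-map $\mu\mapsto\widetilde u(\mu)\in W$ with $\widetilde u(\mu_*)=u_0$, such that every zero of $\mathcal{F}$ in $V\times W$ is of the form $(\mu,\widetilde u(\mu))$. Since $(\mu,u_0)$ is a zero for every $\mu$, uniqueness forces $\widetilde u(\mu)\equiv u_0$ on $V$; shrinking $W$ if necessary so that $W\subset\{u>0\text{ on }[0,T]\}$ (an open condition satisfied by $u_0\equiv 1$) yields the desired neighborhood $U=V\times W$. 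No real obstacle is anticipated here; the only point requiring a little care is the Fredholm analysis of $L$ in the $(C^2_{\mathrm{per}},C^0)$ pairing rather than the more customary Sobolev setup, but this is handled by the compact-perturbation argument above.
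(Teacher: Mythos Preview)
Your proof is correct and follows essentially the same approach as the paper: both linearize the nonlinear map at $u_0\equiv1$ to obtain $v\mapsto v''+(q-1)\mu_* v$, verify it is Fredholm of index zero as a compact perturbation of $v\mapsto v''$ on $C^2_{\mathrm{per}}\to C^0$, check injectivity precisely when $\mu_*\notin\{\mu_k\}$, and conclude via the Implicit (Inverse) Function Theorem. Your write-up is slightly more explicit about the kernel computation and the final neighborhood-shrinking step to ensure positivity, but the substance is identical.
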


\begin{proof}
This is obtained as an easy application of the Inverse Function Theorem. More precisely, consider the smooth map 
\begin{equation}\label{eq:Cper+20}
C^2_{\mathrm{per}}\big([0,T],\mathds R\big)_+\ni u\longmapsto u''+\mu(u^q-u)\in C^0\big([0,T],\mathds R\big),
\end{equation} 
where $C^2_{\mathrm{per}}\big([0,T],\mathds R\big)_+$ is the open set of positive functions in $C^2_{\mathrm{per}}\big([0,T],\mathds R\big)$. The Frechet derivative of \eqref{eq:Cper+20} at the point $u_0\equiv1$ is the bounded linear map $C^2_{\mathrm{per}}\big([0,T],\mathds R\big)\ni v\mapsto v''+(q-1)\mu\, v\in C^0\big([0,T],\mathds R\big)$, which is a Fredholm map of index $0$. Indeed, it is a compact perturbation of the bounded linear map $C^2_{\mathrm{per}}\big([0,T],\mathds R\big)\ni v\mapsto v''\in C^0\big([0,T],\mathds R\big)$, whose kernel has dimension $1$ and image has codimension $1$.
The condition that $\mu_*\not\in\{\mu_k:k\in\mathds N\}$ gives precisely that, for $\mu=\mu_*$, such linear map is injective, hence an isomorphism. 
\end{proof}

The \emph{eigenvalues} of the linearized problem are defined as those $\lambda\in\mathds R$ for which there exists a nontrivial solution $v$ to the linear boundary value problem
\begin{equation}\label{eq:eigenvlinearization}
\left\{\begin{aligned}
&v''+(q-1)\mu \, v=-\lambda\,v,\\
&v(0)=v(T),\quad v'(0)=v'(T).
\end{aligned}\right.
\end{equation}
Again, an elementary computation shows that, for all $\mu>0$, these eigenvalues form a strictly increasing unbounded sequence $\{\lambda_k(\mu)\}_{k\ge0}$, given by:
\begin{equation}\label{eq:lambdakmu}
\lambda_k(\mu)=\frac{4\pi^2}{T^2}k^2-(q-1)\mu,
\end{equation}
whose corresponding eigenspace is spanned by the eigenfunctions
\begin{equation}\label{eq:correigenfunctions}
\phantom{\omega_k=\frac{2\pi k}T.}v^\mathrm{even}_k(t)=\cos(\omega_k\,t),\quad v^\mathrm{odd}_k(t)=\sin(\omega_k\,t),\qquad \omega_k=\frac{2\pi k}T.
\end{equation}
Clearly, $v^\mathrm{even}_k\in C^2_{\mathrm{per}}\big([0,T],\mathds R\big)^\mathrm{even}$, but $v^\mathrm{odd}_k\notin C^2_{\mathrm{per}}\big([0,T],\mathds R\big)^\mathrm{even}$ for all $k\in\mathds N$, thus all eigenvalues of the restriction of \eqref{eq:eigenvlinearization} to $C^2_{\mathrm{per}}\big([0,T],\mathds R\big)^\mathrm{even}$ are \emph{simple}.

Each $\lambda_k$ is a strictly decreasing function of $\mu$, since
\begin{equation}\label{eq:dereigenvalue}
\lambda_k'(\mu)=-(q-1)<0,
\end{equation}
and, evidently, $\lambda_k(\mu_k)=0$ for all $k\ge0$.

\subsection{Local bifurcation}\label{sub:localbif}
If $\mu\le\frac{4\pi^2}{T^2(q-1)}$, then problem $\mathbf P_{\mu}$, given in \eqref{eq:problemOlambdaT}, has a unique positive solution, given by the constant function $u_0\equiv1$, as a consequence of Theorem~\ref{thm:uniqepsismall}. We are now ready to prove that, if $\mu>\frac{4\pi^2}{T^2(q-1)}$, then problem $\mathbf P_{\mu}$ admits multiple positive solutions in $C^2_{\mathrm{per}}\big([0,T],\mathds R\big)^\mathrm{even}$, which bifurcate from $u_0$.

Following the notation of \Cref{sec:crash}, let $I=(0,+\infty)$, $X=C^2_{\mathrm{per}}\big([0,T],\mathds R\big)^\mathrm{even}$, $Y=C^0\big([0,T],\mathds R\big)$, and set $\mathcal P\colon I\times X\to Y$ to be $\mathcal P(\mu,x)=x''-\mu(x-x\vert x\vert^{q-1})$. Then, the set of solutions $\mathcal S=\mathcal P^{-1}(0)$, as in \eqref{eq:defG}, is given by
\[
\mathcal S =\big\{(\mu,u)\in I\times X :u\ \text{is a solution to}\ \mathbf P_{\mu}\big\},
\]
and it contains the \emph{trivial branch} determined by the constant solution $x_0=u_0$, i.e.,
\[\mathcal B_{\mathrm{triv}}=\big\{(\mu,u_0):\mu>0\big\}\subset\mathcal S.\]

The next result describes the geometry of $\mathcal S$ near the points $(\mu_k,u_0)\in\mathcal B_{\mathrm{triv}}$.

\begin{theorem}\label{thm:localbif}
For all $k\in\mathds N$, the point $(\mu_k,u_0)$ is a bifurcation point for the equation $\mathcal P(\mu,x)=0$, where $\mu_k$ is given by \eqref{eq:deginstants}.
More precisely, for all $k\in\mathds N$, there exists a neighborhood $U_k$ of $(\mu_k,u_0)$ in $I\times X$ such that  $\mathcal S \cap U_k$ consists of the union of $U_k\cap\mathcal B_{\mathrm{triv}}$ and the image of a real analytic path $(-\varepsilon,\varepsilon)\ni s\mapsto(\mu_s,u_s)$ that crosses $\mathcal B_{\mathrm{triv}}$ transversely at $(\mu_k,u_0)$ when $s=0$.
\end{theorem}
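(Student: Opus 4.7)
The plan is to apply the Crandall--Rabinowitz theorem (\Cref{thm:CraRab}) at $(\mu_k,u_0)$ for each $k\geq 1$, working in $X=C^2_{\mathrm{per}}([0,T],\mathds{R})^{\mathrm{even}}$ and the corresponding subspace $Y\subset C^0([0,T],\mathds{R})$ of continuous functions even about $T/2$, into which $\mathcal P(\mu,\cdot)$ in fact maps (since $x''$ and $x-|x|^{q-1}x$ are even whenever $x$ is). The crucial point, already set up in the ``Parity'' subsection, is that restricting to even functions renders the relevant kernel \emph{one-dimensional}, which is precisely what permits the use of bifurcation from simple eigenvalues.

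The Fredholm hypothesis (a) follows from the computation in \Cref{sub:linearization}: $L:=\tfrac{\partial\mathcal P}{\partial x}(\mu_k,u_0)$ acts as $Lv=v''+\omega_k^2 v$, with $\omega_k=2\pi k/T$. Since $v\mapsto \omega_k^2 v$ is compact from $X$ into $Y$ (the embedding $C^2\hookrightarrow C^0$ being compact), $L$ is a compact perturbation of $v\mapsto v''$; on the even subspaces the latter is easily seen to be Fredholm of index $0$ (kernel: constants; image: mean-zero even functions), hence so is $L$.

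For hypothesis (b), the $T$-periodic solutions of $v''+\omega_k^2 v=0$ form the two-parameter family $a\cos(\omega_k t)+b\sin(\omega_k t)$; the evenness constraint $v(T-t)=v(t)$ forces $b=0$, so $\ker(L)=\mathds{R}\cdot\xi_0$ with $\xi_0(t)=\cos(\omega_k t)$. For hypothesis (c), differentiating $L_\mu\xi_0=\xi_0''+(q-1)\mu\,\xi_0$ in $\mu$ at $\mu_k$ yields $(q-1)\xi_0$; if one had $(q-1)\xi_0=Lv$ for some $v\in X$, pairing the equation $v''+\omega_k^2 v=(q-1)\cos(\omega_k t)$ with $\cos(\omega_k t)$ and integrating by parts twice (boundary terms vanish by periodicity) annihilates the left-hand side, leaving $0=(q-1)T/2$, a contradiction.

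With all three hypotheses of \Cref{thm:CraRab} verified, and since $\mathcal P$ is real analytic in $x$ near $u_0\equiv 1>0$ (where $|x|^{q-1}x=x^q$ is analytic), the real-analytic version of the theorem yields the claimed path $(-\varepsilon,\varepsilon)\ni s\mapsto(\mu_s,u_s)$; transverse crossing of $\mathcal{B}_{\mathrm{triv}}$ at $s=0$ then follows from the conclusion $\tfrac{d u_s}{ds}\big|_{s=0}=\xi_0\not\equiv 0$. The one delicate point I anticipate is the Fredholm bookkeeping on the even subspace (where codimensions must be computed within $Y$, not the full $C^0$); beyond this, each verification is a direct computation.
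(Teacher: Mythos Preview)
Your proof is correct and follows essentially the same route as the paper's: apply \Cref{thm:CraRab} at each $(\mu_k,u_0)$, verifying (a) via the compact-perturbation Fredholm argument of \Cref{thm:locrigidity}, (b) via simplicity of the eigenvalues on the even subspace, and (c) via $L^2$-orthogonality of $\xi_0=\cos(\omega_k t)$ to the image of $L$ (the paper phrases this as $\lambda_k'(\mu_k)\xi_k\in\mathds R\cdot\xi_k\setminus\{0\}$, which is the same computation). Your explicit restriction of the codomain to the even subspace $Y\subset C^0([0,T],\mathds R)$ is a welcome clarification---the paper writes $Y=C^0([0,T],\mathds R)$ and refers back to the index-$0$ computation on the full space in \Cref{thm:locrigidity}, but as you note, the Fredholm-index bookkeeping on the even subspace only works if the target is restricted accordingly.
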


\begin{proof}
This is an application of the Crandall--Rabinowitz Theorem~\ref{thm:CraRab}, with the setup described above.
The Fredholmness assumption (a) of Theorem~\ref{thm:CraRab} is easily verified as in the proof of Proposition~\ref{thm:locrigidity}.
Assumption (b) of Theorem~\ref{thm:CraRab} is verified using the fact that the eigenvalues of problem \eqref{eq:eigenvlinearization} in $C^2_{\mathrm{per}}\big([0,T],\mathds R\big)^\mathrm{even}$ are simple, as we observed in Subsection~\ref{sub:linearization} above.
Finally, assumption (c) of Theorem~\ref{thm:CraRab} is easily verified using \eqref{eq:dereigenvalue}. Namely, set $\xi_{k}=v_k^\mathrm{even}$, as in \eqref{eq:correigenfunctions}, and note that $\frac{\partial\mathcal P}{\partial x}(\mu,x_0)\xi_{k}=\lambda_k(\mu)\,\xi_k$ for all $\mu$ near $\mu_k$.
Note also that $\xi_k$ spans the one-dimensional kernel of $\frac{\partial\mathcal P}{\partial x}(\mu_k,x_0)$, and that the image of this map does not contain any nonzero multiple of $\xi_{k}$, since it is $L^2$-orthogonal to $\xi_{k}$. Then:
\[ \frac{\mathrm d}{\mathrm d\mu}\Big\vert_{\mu=\mu_k}\frac{\partial\mathcal P}{\partial x}(\mu,x_0)\xi_{k}=\lambda'_k(\mu_k)\,\xi_k
\in\mathds R\cdot\xi_k\setminus\{0\},\]
so assumption (c) in Theorem~\ref{thm:CraRab} holds at each $\mu_k$, concluding the proof.
\end{proof}

\subsection{Global bifurcation results}\label{sub:globalbif}
By \Cref{thm:locrigidity} and Theorem~\ref{thm:localbif}, the closure $\overline{\mathcal S\setminus\mathcal B_{\mathrm{triv}}}$ of the set $\mathcal S\setminus\mathcal B_{\mathrm{triv}}$ is given by the union of
$\mathcal S \setminus\mathcal B_{\mathrm{triv}}$ and the sequence $(\mu_k,u_0)$, $k\ge1$, of bifurcation points.

\begin{definition}\label{thm:defbifbranch}
For all $k\in\mathds N$, let $\mathcal B_k$ be the connected component of $\overline{\mathcal S \setminus\mathcal B_{\mathrm{triv}}}$ that contains the point $(\mu_k,u_0)$, which we call the $k$th \emph{bifurcation branch} of solutions.
\end{definition}

Applying the Rabinowitz Theorem~\ref{thm:rabinowitz}, we obtain the following:

\begin{proposition}\label{thm:globbif}
For all $k\in\mathds N$, the $k$th bifurcation branch $\mathcal B_k$ either contains a point $(\mu_{k'},u_0)\in \mathcal B_{\mathrm{triv}}$, with $k'\neq k$, or else is noncompact. 
\end{proposition}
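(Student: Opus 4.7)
The plan is to apply Rabinowitz's global bifurcation theorem (Theorem~\ref{thm:rabinowitz}) at each bifurcation point $(\mu_k,u_0)$ and to translate its dichotomy into the two alternatives appearing in the statement. The Crandall--Rabinowitz hypotheses were verified in the proof of Theorem~\ref{thm:localbif}, and the properness of the projection $\Pi$ on the solution set is exactly Corollary~\ref{thm:properprojection}, so the only hypothesis still requiring verification is that the parameter component $s\mapsto\mu(s)$ of the local branch supplied by Theorem~\ref{thm:localbif} is non-constant near $s=0$.

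To verify this, I would carry out the Lyapunov--Schmidt expansion at $(\mu_k,u_0)$: writing $u=1+s\,\xi_k+w(s)$ with $\xi_k=\cos(\omega_k t)$ spanning $\Ker\bigl(\partial_x\mathcal P(\mu_k,u_0)\bigr)$ and $w(s)$ in an $L^2$-complement, the projection of $\mathcal P(\mu,u)=0$ onto $\xi_k$ yields a bifurcation equation whose linear part in $s$ is $s(q-1)(\mu-\mu_k)\|\xi_k\|_{L^2}^2$. The quadratic-in-$s$ contribution vanishes because $\int_0^T\cos^3(\omega_k t)\,\dd t=0$, so the first genuine nonlinear correction is cubic and produces a pitchfork of the form $\mu(s)=\mu_k+Cs^2+O(s^3)$; a short calculation (using $\int_0^T\cos^4(\omega_k t)\,\dd t\neq0$ and, for $q=2$, including the quadratic corrections from $w$) gives $C\neq0$, so $\mu(s)$ is non-constant.

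With all hypotheses in place, Theorem~\ref{thm:rabinowitz}, applied on both sides of $s=0$ via Remark~\ref{rem:other-side}, produces a continuous extension of the local branch whose support lies in $\mathcal B_k$ and, at each of $s=\pm\infty$, satisfies either (A) or (B). In alternative (A) the branch accumulates at some $(\mu_\ast,u_0)\in\mathcal B_{\mathrm{triv}}$ with $\mu_\ast\neq\mu_k$; since this point is a limit of nontrivial solutions, Proposition~\ref{thm:locrigidity} forces $\mu_\ast$ to be a degeneracy instant, hence $\mu_\ast=\mu_{k'}$ for some $k'\neq k$, yielding the first alternative of the proposition. In alternative (B), $\mathcal B_k$ is unbounded in $I\times X$ or has its $\mu$-component reaching $\partial I$, so $\mathcal B_k$ is noncompact. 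Positivity of solutions along all of $\mathcal B_k$ is then secured by Proposition~\ref{thm:positivityinabranch} applied to $\mathcal B_k\cap\mathcal E'$, which is connected, $C^0$-closed, and contains the positive element $(\mu_k,1)$. The main obstacle is the Lyapunov--Schmidt computation verifying the non-constancy of $\mu(s)$; everything else is a bookkeeping application of the machinery built up in the previous sections.
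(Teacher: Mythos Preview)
Your proposal is correct and follows essentially the same route as the paper: invoke Theorem~\ref{thm:rabinowitz} at each $(\mu_k,u_0)$, use Corollary~\ref{thm:properprojection} for properness, and use Proposition~\ref{thm:positivityinabranch} to guarantee that the Rabinowitz branch for the unconstrained problem \eqref{eq:withoutpositivity} never leaves the positivity region, so that it coincides with $\mathcal B_k$. Your use of Proposition~\ref{thm:locrigidity} to identify a reattachment point $(\mu_\ast,u_0)$ as one of the degeneracy instants $\mu_{k'}$ is exactly the right bridge between alternative~(A) of Theorem~\ref{thm:rabinowitz} and the first alternative of the proposition; the paper leaves this implicit.

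The one place where you do more than the paper is the non-constancy of $s\mapsto\mu(s)$. The paper's short proof simply does not address this hypothesis of Theorem~\ref{thm:rabinowitz}, presumably either because the cited topological version of Rabinowitz's theorem (\cite[Theorem~1.3]{rabinowitz}, phrased in terms of connected components rather than analytic paths) does not require it, or because it is regarded as routine. Your Lyapunov--Schmidt expansion is a valid way to verify it, and your outline is correct: the quadratic term vanishes because $\int_0^T\cos^3(\omega_k t)\,\dd t=0$, and the cubic coefficient is nonzero (for $q>1$ the relevant coefficient involves $\tfrac{q(q-1)}{2}$ and $\tfrac{q(q-1)(q-2)}{6}$ together with the $w$-correction, and one checks it does not vanish). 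An equivalent and perhaps more transparent argument is via the period function of the planar Hamiltonian system in Figure~\ref{fig:omegaqepsilon}: non-constancy of $\mu(s)$ is the same as non-constancy of the period of closed orbits near the center $(1,0)$, and the first anharmonic correction to the period is nonzero for the potential $V(u)=\tfrac{\mu}{2}u^2-\tfrac{\mu}{q+1}u^{q+1}$. Either way, this is indeed the only nontrivial verification, and you have correctly isolated it.
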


\begin{proof}
This is easily obtained from Theorem~\ref{thm:rabinowitz} and Corollary~\ref{thm:properprojection}. 
The only detail that requires some justification is the positivity condition $u>0$ in $\mathbf P_{\mu}$, since, in principle, the bifurcation branches for the boundary value problem 
\begin{equation}\label{eq:withoutpositivity}
u''-\mu(u-\vert u\vert^{q-1}u)=0,\quad u(0)=u(T),\quad u'(0)=u'(T),
\end{equation}
to which Theorem~\ref{thm:rabinowitz} applies, may leave the open set defined by $u>0$. However, Proposition~\ref{thm:positivityinabranch} tells us that this does not occur: all solutions to \eqref{eq:withoutpositivity} in bifurcation branches issuing from $\mathcal B_{\mathrm{triv}}$ remain positive on $[0,T]$.
\end{proof}

Let us show that the first alternative in the statement of Proposition~\ref{thm:globbif} does not occur. This is done by showing that bifurcation branches do not intersect, which implies that each bifurcation branch contains exactly one point in the trivial branch.

\begin{proposition}\label{thm:numberofzerosbranch}
If $(\mu,u)\in\mathcal B_k \setminus\mathcal B_{\mathrm{triv}}$, then $u-u_0$ has exactly $2k$ zeros in $\left[0,T\right)$.
\end{proposition}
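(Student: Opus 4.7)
My plan is to prove the proposition by defining a zero-counting function on the nontrivial solution set and showing it is locally constant, computing its value on a punctured neighborhood of each bifurcation point, and combining both facts via the connectedness of $\mathcal B_k$. For $(\mu,u)\in\mathcal S\setminus\mathcal B_{\mathrm{triv}}$, let $N(\mu,u)$ denote the number of zeros of $u-u_0$ in $[0,T)$. By Corollary~\ref{thm:qualitative} these zeros are all simple (and hence finite in number), and by periodicity $N(\mu,u)$ is even.

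The two key technical ingredients are the following. First, $N$ is locally constant on $\mathcal S\setminus\mathcal B_{\mathrm{triv}}$: continuous dependence of ODE solutions on parameters and initial data yields $C^2$-convergence along any convergent sequence in $\mathcal S$, and each simple zero of $u-u_0$ is $C^1$-stable, so it persists under small perturbations; no new zeros can appear since $|u-u_0|$ is bounded below outside a small neighborhood of its zero set. Second, near each bifurcation point $(\mu_j,u_0)$ every nontrivial solution sufficiently close to it satisfies $N=2j$: by Theorem~\ref{thm:localbif} the local analytic path $s\mapsto(\mu_s,u_s)$ satisfies $u_s=u_0+s\,v_j^\mathrm{even}+o(s)$ in $C^2$ with $v_j^\mathrm{even}(t)=\cos(\omega_j t)$, so $s^{-1}(u_s-u_0)\to v_j^\mathrm{even}$ in $C^2$ as $s\to0$, and $v_j^\mathrm{even}$ has exactly $2j$ simple zeros in $[0,T)$.

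For the topological conclusion, set
\[
\mathcal B_k^{(j)}:=\big\{(\mu,u)\in\mathcal B_k\setminus\mathcal B_{\mathrm{triv}}:N(\mu,u)=2j\big\},\qquad D_j:=\mathcal B_k^{(j)}\cup\big(\{(\mu_j,u_0)\}\cap\mathcal B_k\big),
\]
for each $j\in\mathds N$. The first ingredient makes $\mathcal B_k^{(j)}$ open in $\mathcal B_k$, and closedness of $D_j$ in $\mathcal B_k$ follows because any trivial-branch limit of a sequence in $\mathcal B_k^{(j)}$ must be a bifurcation point $(\mu_l,u_0)$ by Proposition~\ref{thm:locrigidity}, and the second ingredient then forces $l=j$. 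Openness of $D_j$ at the potential trivial-branch point $(\mu_j,u_0)$ relies on the complete local description of $\mathcal S$ given by Theorem~\ref{thm:localbif}---trivial branch together with a single analytic path on which $N\equiv2j$---combined with Proposition~\ref{thm:locrigidity}, which excludes further points of $\mathcal B_k\cap\mathcal B_{\mathrm{triv}}$ in a small enough neighborhood. Each $D_j$ is thus clopen in the connected set $\mathcal B_k$; since $D_k$ is nonempty (it contains the local branch at $(\mu_k,u_0)$), I conclude $D_k=\mathcal B_k$ and $D_j=\emptyset$ for $j\neq k$, which is the desired statement.

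I expect the openness of $D_j$ at $(\mu_j,u_0)$ to be the most delicate step, since it requires knowing that the local picture of $\mathcal S$ near $(\mu_j,u_0)$ contains nothing beyond the trivial branch and a single bifurcating analytic curve. This is exactly what Theorem~\ref{thm:localbif} provides, and it is available only because restricting to the even subspace $C^2_{\mathrm{per}}([0,T],\mathds R)^{\mathrm{even}}$ renders the eigenvalues of the linearization \emph{simple}, ruling out a potentially more complicated crossing of bifurcation branches at $(\mu_j,u_0)$.
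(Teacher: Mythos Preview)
Your proposal is correct and follows essentially the same approach as the paper's proof: use Corollary~\ref{thm:qualitative} to get simple zeros, argue that the zero count is locally constant on $\mathcal S\setminus\mathcal B_{\mathrm{triv}}$, and compute it near the bifurcation point via the Crandall--Rabinowitz local form from Theorem~\ref{thm:localbif}. Your clopen-set formulation of the connectedness argument is somewhat more explicit than the paper's (which instead asserts that $\mathcal B_k\setminus\{(\mu_k,u_0)\}$ has at most two connected components and that the count is constant on each), but the underlying ideas are identical.
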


\begin{proof}
For all $(\mu,u)\in\mathcal S \setminus\mathcal B_{\mathrm{triv}}$, the function $u-u_0$ has only simple zeros by Corollary~\ref{thm:qualitative}. The $T$-periodic function $(u-u_0)\colon\mathds R\to\mathds R$ has the same number of zeros in $\left[0,T\right)$ as the function $(u-u_0)\colon \mathds S^1\to \mathds R$ it induces on the quotient $\mathds S^1=\mathds R/T\mathds Z$. Clearly, a continuous family of real-valued functions on $\mathds S^1$ whose zeros are simple must have a constant number of zeros. This applies to each one of the (at most two) connected components of
$\mathcal B_k \setminus\{(\mu_k,u_0)\}$.
So, in order to conclude the proof, it suffices to show that, given any $(\mu,u)\in\mathcal B_k \setminus\{(\mu_k,u_0)\}$ sufficiently close to $(\mu_k,u_0)$, then $u-u_0$ has exactly $2k$ zeros in $\left[0,T\right)$. This follows from Theorem~\ref{thm:CraRab}, since it implies that, near each bifurcation point $(\mu_k,u_0)$, the bifurcation branch is the image of a real analytic path 
\[\left(-\varepsilon,\varepsilon\right)\ni s\longmapsto\big(\mu_k(s),1+s\,J_k+s\,\psi_k(s)\big),\] 
where $\mu_k(0)=\mu_k$, $J_k(t)=\cos\!\big(\frac{2\pi k}{T}t\big)$, and $\psi_k(s)\in C^2_{\mathrm{per}}\big([0,T],\mathds R\big)^\mathrm{even}$ for all $s$, with $\psi_k(0)=0$. 
Thus, if $(\mu,u)\in\mathcal B_k \setminus\{(\mu_k,u_0)\}$ is sufficiently close to $(\mu_k,u_0)$, then zeros of $u-u_0$ correspond to zeros of $f^s(t)=\cos\!\big(\frac{2\pi k}{T}t\big)+s \psi_k(s)(t)$. Finally, if $\vert s\vert$ is small enough, then the  functions $f^s(t)$ and $\cos\!\big(\frac{2\pi k}{T}t\big)$ have the same number of zeros in $[0,T)$, which is equal to $2k$.
\end{proof}

\begin{corollary}\label{thm:noncompactbranches}
The bifurcation branches $\mathcal B_k$  are pairwise disjoint and noncompact.
\end{corollary}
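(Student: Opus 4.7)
The plan is to use the number of zeros of $u - u_0$ in $[0,T)$ as a discrete invariant that distinguishes the branches, combining Proposition~\ref{thm:numberofzerosbranch} (zero count along each branch) with the dichotomy in Proposition~\ref{thm:globbif} to rule out the reattachment alternative. The only subtlety is that the invariant degenerates on $\mathcal B_{\mathrm{triv}}$, so limit points where $u = u_0$ must be handled via the local Crandall--Rabinowitz parametrization.

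For pairwise disjointness, I would fix $k\neq k'$ and assume for contradiction that $(\mu,u) \in \mathcal B_k \cap \mathcal B_{k'}$. If $u \neq u_0$, then $(\mu,u) \in \mathcal S\setminus\mathcal B_{\mathrm{triv}}$, and applying Proposition~\ref{thm:numberofzerosbranch} to both branches yields the immediate contradiction $2k = 2k'$. If $u = u_0$, local rigidity (Proposition~\ref{thm:locrigidity}) forces $\mu = \mu_j$ for some $j \ge 1$; relabelling if necessary, I may assume $j \neq k$, so the whole issue reduces to showing that $\mathcal B_k$ cannot contain $(\mu_j, u_0)$ for any $j \neq k$.

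To rule this out, I would approximate $(\mu_j, u_0)$ by a sequence $(\mu_n, u_n) \in \mathcal B_k \setminus \mathcal B_{\mathrm{triv}}$ and invoke Theorem~\ref{thm:localbif}: for large $n$ the pair $(\mu_n, u_n)$ lies on the local analytic branch $s \mapsto (\mu_j(s),\, 1 + s J_j + s\psi_j(s))$ with $J_j(t) = \cos(2\pi j t/T)$. The same perturbation argument used at the end of the proof of Proposition~\ref{thm:numberofzerosbranch} shows that, for $|s|$ small, $u_n - u_0$ has exactly $2j$ zeros in $[0,T)$. On the other hand, since $(\mu_n,u_n) \in \mathcal B_k \setminus \mathcal B_{\mathrm{triv}}$, Proposition~\ref{thm:numberofzerosbranch} forces this count to be $2k$, contradicting $j \neq k$.

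Once disjointness is in hand, each $\mathcal B_k$ meets $\mathcal B_{\mathrm{triv}}$ only at $(\mu_k, u_0)$, so the first alternative in Proposition~\ref{thm:globbif} is excluded and $\mathcal B_k$ must be noncompact. I do not anticipate any serious obstacle: the argument is essentially bookkeeping built on the local analytic form of the branch near each $(\mu_j, u_0)$ and the constancy of the zero count, both already established.
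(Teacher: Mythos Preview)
Your argument is correct and follows essentially the same route as the paper's proof: use Proposition~\ref{thm:numberofzerosbranch} to separate branches at nontrivial points, use the local Crandall--Rabinowitz form (Theorem~\ref{thm:localbif}) to handle possible intersections on $\mathcal B_{\mathrm{triv}}$, and then invoke Proposition~\ref{thm:globbif}. The paper compresses your steps~3--6 into the single observation that ``there is only one bifurcation branch near each bifurcation point along $\mathcal B_{\mathrm{triv}}$,'' but the content is the same.
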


\begin{proof}
Distinct bifurcation branches do not intersect at any point outside $\mathcal B_{\mathrm{triv}}$ by Proposition~\ref{thm:numberofzerosbranch}, since they correspond to functions with different numbers of zeros in $[0,T)$. Furthermore, distinct bifurcation branches cannot intersect at a point on $\mathcal B_{\mathrm{triv}}$, because, by the local form of the bifurcation branches (see Theorem~\ref{thm:CraRab}), there is only one bifurcation branch near each bifurcation point along $\mathcal B_{\mathrm{triv}}$. Thus, each bifurcation branch contains exactly one point of the trivial branch $\mathcal B_{\mathrm{triv}}$, so it is noncompact by Proposition~\ref{thm:globbif}.
\end{proof}

Let us now show that each bifurcation branch $\mathcal B_k$ contains points $(\mu,u)$ with arbitrary $\mu\ge\mu_k$. 
Recall $\Pi\colon I\times X\to I$ is the projection onto the first factor. 

\begin{proposition}\label{thm:arbitrarymu}
For all $k\in\mathds N$, the set $\Pi\big(\mathcal B_k \setminus\{(\mu_k,u_0)\}\big)$ contains the open half-line $\left(\mu_k,+\infty\right)$.
\end{proposition}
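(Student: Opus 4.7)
The plan is to combine three ingredients already proved: the noncompactness of the branch $\mathcal{B}_k$ (Corollary~\ref{thm:noncompactbranches}), the properness of the projection $\Pi$ on the full solution set (Corollary~\ref{thm:properprojection}), and the low-$\mu$ uniqueness result Theorem~\ref{thm:uniqepsismall}, which (after the rescaling that reduces general $T$ to $T=2\pi$) forbids nontrivial positive $T$-periodic solutions to \eqref{eq:bifODE} for $\mu\leq\mu_1=\frac{4\pi^2}{T^2(q-1)}$.

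First I would observe that $\mathcal{B}_k$ is a closed subset of $I\times X$, being a connected component of the closed set $\overline{\mathcal{S}\setminus\mathcal{B}_{\mathrm{triv}}}$. Since $\Pi$ is continuous and $\mathcal{B}_k$ is connected, $\Pi(\mathcal{B}_k)$ is a connected subinterval of $I=(0,+\infty)$ that contains $\mu_k$.

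Next I would use properness to show that $\Pi(\mathcal{B}_k)$ cannot be contained in any compact subinterval $K\subset(0,+\infty)$: otherwise $\mathcal{B}_k\subset\Pi^{-1}(K)\cap\mathcal{S}$ would be a closed subset of a compact set, hence compact, contradicting Corollary~\ref{thm:noncompactbranches}. Therefore $\Pi(\mathcal{B}_k)$ is either unbounded above or has infimum $0$. Theorem~\ref{thm:uniqepsismall} rules out the latter, as it implies $\Pi(\mathcal{B}_k\setminus\{(\mu_k,u_0)\})\subset(\mu_1,+\infty)$ and hence $\Pi(\mathcal{B}_k)\subset[\mu_1,+\infty)$. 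Thus $\Pi(\mathcal{B}_k)$ is a connected interval containing $\mu_k$, bounded below by $\mu_1>0$, and unbounded above; any such interval contains $[\mu_k,+\infty)\supset(\mu_k,+\infty)$, as required.

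The only delicate point — such as it is — is the topological bookkeeping in the properness step: checking that $\mathcal{B}_k$ is closed in the ambient space $I\times X$ so that combining Corollaries~\ref{thm:noncompactbranches} and~\ref{thm:properprojection} really does force $\Pi(\mathcal{B}_k)$ to leave every compact subinterval of $I$, and then ruling out the "escape toward $\mu=0$" alternative via the uniqueness bound $\mu>\mu_1$. Everything else is an immediate synthesis of the preceding results.
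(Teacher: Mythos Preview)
Your argument is correct and follows essentially the same route as the paper's: connectedness of $\Pi(\mathcal{B}_k)$, noncompactness of $\mathcal{B}_k$, and properness of $\Pi$ combine to force $\Pi(\mathcal{B}_k)$ to be an unbounded interval containing $\mu_k$. The only cosmetic difference is that you make the dichotomy ``unbounded above versus infimum $0$'' explicit and invoke Theorem~\ref{thm:uniqepsismall} to exclude the second alternative, whereas the paper absorbs this step silently by writing $\mathcal{B}_k\subset\Pi^{-1}\big([\mu_1,\mu_*]\big)$ without comment; your version is arguably cleaner on this point.
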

\begin{proof}
Let us show that $\Pi\big(\mathcal B_k \big)\supset\left[\mu_k,+\infty\right)$. Clearly, $\mu_k\in \Pi\big(\mathcal B_k \big)$. Since $\Pi\big(\mathcal B_k \big)$ is a connected subset of $\mathds R$, it suffices to show that $\Pi\big(\mathcal B_k \big)$ is unbounded. If this were not the case, there would exist $\mu_*>0$ such that $\mathcal B_k \subset{\Pi}^{-1}\big([\mu_1,\mu_*]\big)$. However, by Corollary~\ref{thm:properprojection}, the restriction of $\Pi$ to $\mathcal S$ is proper, and therefore ${\Pi}^{-1}\big([\mu_1,\mu_*]\big)\cap\mathcal S$ is compact. But $\mathcal B_k$ is closed in $\mathcal S$, and it is noncompact by Corollary~\ref{thm:noncompactbranches}. This gives a contradiction originating from the assumption that $\Pi\big(\mathcal B_k\big)$ is bounded.
\end{proof}

Together, Theorem~\ref{thm:uniqepsismall}, \Cref{thm:numberofzerosbranch,thm:arbitrarymu}, and \Cref{thm:noncompactbranches} lead to the following conclusion, proving Theorem~\ref{mainthm:A} in the Introduction, see also \Cref{fig:bif-branches}.

\begin{theorem}\label{thm:finalresult}
The following hold, where $\mu_k=\frac{4\pi^2k^2}{T^{2}(q-1)}$, $k\in\mathds N$,
as in \eqref{eq:deginstants}.
\begin{enumerate}[\rm (a)]
\item
For all $\mu\in\left(0,\mu_1\right]$, problem $\mathbf P_{\mu}$ has a unique solution, given by $u\equiv1$;
\item
For all $k\in\mathds N$ and $\ell\in\{1,\ldots,k\}$,\ if $\mu\in\left(\mu_k,\mu_{k+1}\right]$, then there exists a solution $u_\ell\in X$ of problem $\mathbf P_{\mu}$, such that $u_{\ell}-1$ has exactly $2\ell$ zeros in $\left[0,T\right)$.
\end{enumerate}
\end{theorem}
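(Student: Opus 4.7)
The statement is really a bookkeeping assembly of the results already proved: part (a) reduces to the uniqueness theorem of Section~\ref{sec:uniqueness}, and part (b) is obtained by harvesting one solution from each of the first $k$ bifurcation branches.

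\emph{Plan for part (a).} I would first use the time-rescaling already recorded at the start of Section~\ref{sec:uniqueness}: if $u$ is a positive $T$-periodic solution to \eqref{eq:bifODE}, then $\bar u(t)=u\bigl(\tfrac{T}{2\pi}t\bigr)$ is a positive $2\pi$-periodic solution to $\bar u''=\tfrac{T^{2}}{4\pi^{2}}\mu\,(\bar u-\bar u^{q})$. The inequality $\mu\le\mu_{1}=\tfrac{4\pi^{2}}{T^{2}(q-1)}$ is exactly $\tfrac{T^{2}}{4\pi^{2}}\mu\le\tfrac{1}{q-1}$, so Theorem~\ref{thm:uniqepsismall} applied to $\bar u$ forces $\bar u\equiv 1$, and thus $u\equiv 1$.

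\emph{Plan for part (b).} Fix $\mu\in(\mu_k,\mu_{k+1}]$ and $\ell\in\{1,\dots,k\}$. Since $\mu>\mu_k\ge\mu_\ell$, Proposition~\ref{thm:arbitrarymu} applied to the $\ell$th bifurcation branch provides some $(\mu,u_\ell)\in\mathcal B_\ell\setminus\{(\mu_\ell,u_0)\}$. By Corollary~\ref{thm:noncompactbranches}, distinct branches meet $\mathcal B_{\mathrm{triv}}$ only at their respective bifurcation points, hence $u_\ell\not\equiv 1$ and $(\mu,u_\ell)\in\mathcal B_\ell\setminus\mathcal B_{\mathrm{triv}}$. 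Then Proposition~\ref{thm:numberofzerosbranch} gives that $u_\ell-1$ has exactly $2\ell$ zeros in $[0,T)$. Letting $\ell$ range over $\{1,\dots,k\}$ yields $k$ solutions in $C^{2}_{\mathrm{per}}\bigl([0,T],\mathds R\bigr)^{\mathrm{even}}$ with pairwise different zero counts, which (by \Cref{thm:equivtoeven}) are pairwise nonequivalent under translation; this establishes the lower bound in the main Theorem.

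\emph{Main obstacle.} Essentially none: the hard work is already encapsulated in the global bifurcation machinery (Theorems~\ref{thm:CraRab} and~\ref{thm:rabinowitz}, Corollary~\ref{thm:properprojection}, and Propositions~\ref{thm:numberofzerosbranch}--\ref{thm:arbitrarymu}). The only subtlety worth highlighting in the write-up is why $\mu$ is attained on each branch $\mathcal B_\ell$ with $\ell\le k$: this is exactly the surjectivity of $\Pi|_{\mathcal B_\ell}$ onto $\bigl(\mu_\ell,+\infty\bigr)$ granted by Proposition~\ref{thm:arbitrarymu}, whose proof used both the noncompactness of $\mathcal B_\ell$ and the properness of $\Pi$ to rule out a bounded projection. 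No further estimates are needed.
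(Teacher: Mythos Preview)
Your proposal is correct and mirrors the paper's own (terse) proof, which simply records that the statement follows by combining Theorem~\ref{thm:uniqepsismall} with Propositions~\ref{thm:numberofzerosbranch} and~\ref{thm:arbitrarymu} and Corollary~\ref{thm:noncompactbranches}; you have spelled out exactly how these pieces fit together. One cosmetic remark: the pairwise nonequivalence of the $u_\ell$ follows directly from their distinct zero counts (translations preserve the number of zeros of $u-1$ on a period), so the appeal to Lemma~\ref{thm:equivtoeven} is unnecessary there.
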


\begin{figure}[htbp]
\vspace{-.25cm}
\centering
\begin{tikzpicture}[scale=1.07]
\fill[black!50,opacity=0.25] (-1.2,3.5) -- (1.2,2) -- (1.2,-3.5) -- (-1.2,-2) -- cycle;
\draw (.5,-2.4) node {$\{\mu\}\times X$};
\draw (-5,-2.6) node[left] {$\phantom{C^2_{\mathrm{per}}\big([0,T],\mathds R\big)^\mathrm{even}}$};

  \fill[top color=gray!15,bottom color=gray!10,middle color=gray!5,shading=axis,opacity=0.15] (0,0) 
  circle[x radius=2,y radius=0.5,rotate=90];
  \pgfmathsetmacro{\alphacrit}{acos(1/12)}  
  \draw[left color=gray!30!black,right color=gray!10!black,middle color=gray!20,shading=axis,
    fill opacity=0.25, thick,rotate=90] 
    (90-\alphacrit:2 and 0.5) -- (0,6) -- (90+\alphacrit:2 and 0.5) 
  arc[start angle=90+\alphacrit,end angle=450-\alphacrit,x radius=2,y radius=0.5,rotate=90];
  \draw[densely dashed, thick,rotate=90] 
    (90-\alphacrit:2 and 0.5) 
  arc[start angle=90-\alphacrit,end angle=90+\alphacrit,x radius=2,y radius=0.5,rotate=90];
  
  \draw[thick, -] (-6,-1) -- (-3.01,-1);
  \draw[thick, dashed, -] (-2.91,-1) -- (0.4,-1);
  \draw[thick, ->] (0.425,-1) -- (2,-1) node[right] {${\mu}$};
  
  \draw[-] (-6,-0.9) -- (-6,-1.1);
  \draw (-6,-1.155) node[below] {$_{0}$};
  \draw[-] (-4.45,-0.9) -- (-4.45,-1.1);
  \draw (-4.45,-1.155) node[below] {$_{\mu_1}$};
  \draw[-] (-2.86,-0.9) -- (-2.86,-1.1);
  \draw (-2.86,-1.155) node[below] {$_{\mu_2}$};
  \draw[-] (-1.38,-0.9) -- (-1.38,-1.1);
  \draw (-1.38,-1.155) node[below] {$_{\mu_3}$};

\draw[myred,very thick,variable=\t,domain=0:1.21,samples=400,rotate=90]
      plot ({sinh(\t)},{10-5.5*cosh(\t)});
  \draw[myred,very thick,variable=\t,domain=0:1.21,samples=400,rotate=90]
      plot ({-sinh(\t)},{10-5.5*cosh(\t)});

  \draw[myred,very thick,variable=\t,domain=0:1.045,samples=400,rotate=90]
      plot ({sinh(\t)},{8-5*cosh(\t)});
  \draw[myred,very thick,variable=\t,domain=0:1.045,samples=400,rotate=90]
      plot ({-sinh(\t)},{8-5*cosh(\t)});

  \draw[myred,very thick,variable=\t,domain=0:.79,samples=400,rotate=90]
      plot ({sinh(\t)},{6-4.5*cosh(\t)});
  \draw[myred,very thick,variable=\t,domain=0:.79,samples=400,rotate=90]
      plot ({-sinh(\t)},{6-4.5*cosh(\t)});


\draw (-6,0) node[left] {${\color{myblue} \mathcal B_{\mathrm{triv}}}$};
\draw[myblue,very thick,variable=\t,domain=0:5.98,samples=200,rotate=90]
      plot (0,\t);

 \draw[thick,rotate=90] 
    (90-\alphacrit:2 and 0.5) -- (0,6) -- (90+\alphacrit:2 and 0.5) 
  arc[start angle=90+\alphacrit,end angle=450-\alphacrit,x radius=2,y radius=0.5,rotate=90];

  \draw[fill=white] (-4.5,0) circle[radius=1pt];
  \draw[fill=white] (-3,0) circle[radius=1pt];
  \draw[fill=white] (-1.5,0) circle[radius=1pt];

  \draw (0.15,1.65) node {$_{u_1}$};
  \draw[fill=myred] (0,1.52) circle[radius=1pt];
  \draw[fill=myred] (0,-1.52) circle[radius=1pt];

  \draw (0.15,1.34) node {$_{u_2}$};
  \draw[fill=myred] (0,1.25) circle[radius=1pt];
  \draw[fill=myred] (0,-1.25) circle[radius=1pt];

  \draw (0.15,.98) node {$_{u_3}$};
  \draw[fill=myred] (0,0.88) circle[radius=1pt];
  \draw[fill=myred] (0,-0.88) circle[radius=1pt];


  \draw (0.15,0.2) node {$_{u_0}$};
  \draw[fill=myblue] (0,0) circle[radius=1pt];

\draw (-3.8,0.25) node {${\color{myred} \mathcal B_1}$};
\draw (-2.3,0.25) node {${\color{myred} \mathcal B_2}$};
\draw (-0.8,0.25) node {${\color{myred} \mathcal B_3}$};
\end{tikzpicture}
\vspace{-.25cm}
\caption{A portion of the set $\mathcal S$ of solutions to $\mathbf P_{\mu}$, with three bifurcation branches (red) issuing from the trivial branch (blue). By \Cref{thm:positivityinabranch}, solutions in bifurcation branches are positive; by \Cref{thm:uniformbounds}, their $C^2$-norm grows at most linearly with~$\mu$.}
\label{fig:bif-branches}
\end{figure}
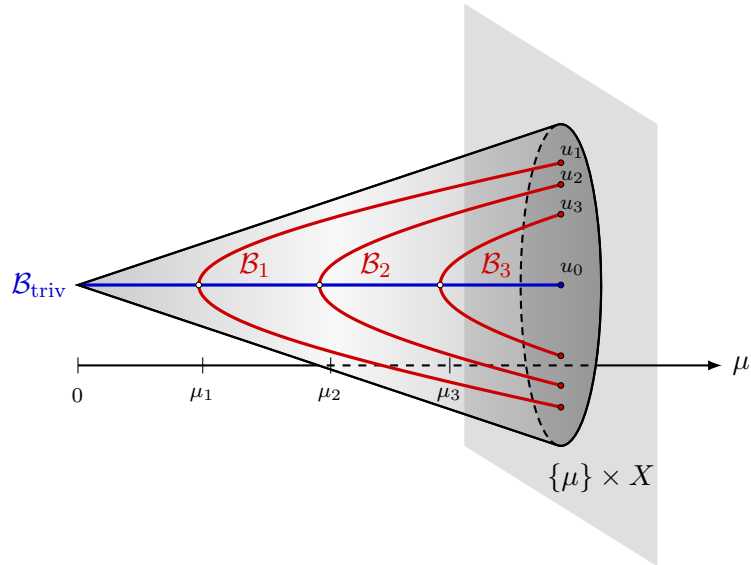 

\begin{remark}
Each bifurcation branch $\mathcal B_k$ contains the image of a real analytic path $(-\infty,+\infty) \ni s\mapsto \big(\mu_k(s),u_k(s)\big)\in I\times X$, by \Cref{rem:other-side}. Thus, in light of \Cref{thm:uniqepsismall}, \Cref{thm:properprojection}, and \Cref{thm:arbitrarymu}, it intersects each slice $\{\mu\}\times X$ with $\mu>\mu_k$ at least \emph{twice}, see \Cref{fig:bif-branches}. However, it is unclear to us whether such solutions must be equivalent (i.e., obtained from one another by a translation).
\end{remark}

\end{section}

\begin{section}{The Yamabe equation}
\subsection{Conformal metrics and scalar curvature}
Background material regarding this subsection can be found in the standard texts \cite{LeeParker87} and \cite{Schoen89}.
\smallskip

Let $(M^{n+1},g_M)$, $n\ge2$, be a compact Riemannian manifold with constant scalar curvature $R_M>0$.
Given a smooth positive function $u\colon M\to\mathds R$, the conformal metric $g=u^\frac4{n-1}\cdot g_M$ has scalar curvature $R_g$ given by
\[R_g=\frac{4n}{n-1}u^{-\frac{n+3}{n-1}}\,L_M(u),\]
where $L_M$ is the \emph{conformal Laplacian} of $(M,g_M)$, defined as
\[L_M(u)=-\Delta_M(u)+\frac{n-1}{4n}R_M\,u,\]
where $\Delta_M$ is the (nonpositive) Laplacian of $(M,g_M)$. Recall that the volume of $(M,g)$ is $\vol(M,g)=\int_Mu^{\frac{2n+2}{n-1}}\,\dd\nu_M$,
where $\dd\nu_M$ is the volume element of $g_M$.

The \emph{Yamabe problem} consists of finding constant scalar curvature metrics in the conformal class of $g_M$. Using the above setup, this is equivalent to determining solutions to the following nonlinear elliptic PDE on $M$:
\begin{equation}\label{eq:Yamabe1}
-\Delta_M(u)+\frac{n-1}{4n}\,R_M\, u=C\,\frac{n-1}{4n}\,u^{\frac{n+3}{n-1}},
\end{equation}
for some (necessarily positive) constant $C$. Homothetic solutions to the problem are considered equivalent. So, it is customary to either impose a volume constraint for solutions, or, as we will do here, to normalize the value $R_M$ of the scalar curvature of solutions. This means that we set $C=R_M$ in \eqref{eq:Yamabe1}, and consider all the solutions to
\begin{equation}\label{eq:Yamabe2}
\Delta_M(u)-\frac{n-1}{4n}\,R_M \, \left[u-u^{\frac{n+3}{n-1}}\right]=0.
\end{equation}  
Clearly, the constant function $u\equiv1$ solves \eqref{eq:Yamabe2}.

\subsection[The Yamabe equation]{\texorpdfstring{The Yamabe equation in $N\times\mathds S^1$}{The Yamabe equation}}
Let us now consider a closed Riemannian manifold $(N^n,g_N)$, $n\ge2$, with constant scalar curvature $R_N>0$, a positive number $r$, and the circle $\mathds S^1=\mathds R/2\pi\mathds Z$ endowed with the metric $r^2\dd t^2$ of length $2\pi r$. Then, we set $M=N\times\mathds S^1$, and consider the product metric $g_M=g_N\oplus r^2\dd t^2$ on $M$, whose scalar curvature is equal to $R_N$. Let us restrict ourselves to conformal factors $u\colon N\times\mathds S^1\to\mathds R^+$ that depend only on the variable $t\in\mathds S^1$. In this situation, the Laplacian $\Delta_M(u)$ coincides with the Laplacian of the circle $\mathds S^1$, given by
\[\Delta_M(u)=\frac1{r^2}u'',\]
and the Yamabe equation \eqref{eq:Yamabe2} becomes:
\begin{equation}\label{eq:Yamabe3}
u''-\frac{n-1}{4n}R_N\,r^2\left(u-u^{\frac{n+3}{n-1}}\right)=0.
\end{equation}
Thus, these so-called \emph{basic} solutions to the Yamabe problem correspond to the functions $u\colon\mathds R\to\mathds R^+$ that solve \eqref{eq:Yamabe3} and satisfy the periodicity conditions
\begin{equation}\label{eq:BCYamabe}
u(0)=u(2\pi),\quad\text{and}\quad u'(0)=u'(2\pi).
\end{equation}
In other words, the \emph{basic Yamabe problem} on $N\times \mathds S^1$, given by \eqref{eq:Yamabe3} and \eqref{eq:BCYamabe}, reduces precisely to problem $\mathbf P_{\mu}$ given in \eqref{eq:problemOlambdaT}, with $q=\frac{n+3}{n-1}>1$, $T=2\pi$, and $\mu=\frac{n-1}{4n}R_N\,r^2>0$. Replacing the parameter $\mu>0$ with the parameter $r>0$, the degeneracy instants $\mu_k=\frac{4\pi^2k^2}{T^2(q-1)}$  correspond to $r_k=k\sqrt{n/R_N}$, see \eqref{eq:deginstants}.

More generally, given an isometry $\phi\in\mathrm{Iso}(N,g_N)$, consider the \emph{mapping torus} $M_\phi:=N\times [0,2\pi]/\sim$, where $(p,0)\sim (\phi(p),2\pi)$ for all $p\in N$. 
Since $\phi\colon N\to N$ is an isometry, the product metric $g_N\oplus r^2\dd t^2$ on $N\times [0,2\pi]$ descends to a (locally isometric) Riemannian metric on $M_\phi$, such that the projection $M_\phi\to \mathds S^1$ is a Riemannian submersion with totally geodesic fibers isometric to $(N,g_N)$. In particular, basic solutions to the Yamabe problem on $M_\phi$ also correspond to $u\colon\mathds R\to\mathds R^+$ satisfying $\mathbf P_{\mu}$, as above.
While the mapping torus $M_{\phi}$ is diffeomorphic to $N\times \mathds S^1$ if $\phi\colon N\to N$ is smoothly isotopic to the identity, this construction produces different manifolds if 
 $\phi\in\mathrm{Iso}(N,g_N)$ is in different (nontrivial) smooth isotopy classes.

\subsection{Multiplicity of solutions to the Yamabe problem}
In the context of the basic Yamabe problem on $N\times\mathds S^1$, or, more generally, $M_\phi$, Theorem~\ref{thm:finalresult} yields the following result, which implies Theorem~\ref{mainthmB} in the Introduction:

\begin{theorem}\label{thm:finalresult_yamabe}
Let $(N^n,g_N)$ be a compact Riemannian manifold with constant scalar curvature $R_N>0$ and $n\ge2$.
Consider the Yamabe problem on the product manifold $M=N\times\mathds S^1$; or, more generally, on the mapping torus $M=M_\phi$, where $\phi\in\mathrm{Iso}(N,g_N)$.
The number of basic solutions in the conformal class of $g_N\oplus r^2\dd t^2$ 
is equal to $1$ if $0<r<\sqrt{n/R_N}$, and becomes arbitrarily large as~$r\nearrow+\infty$.
More precisely, given $k\in\mathds N$ and $r>k\,\sqrt{n/R_N}$, there exist smooth positive nonconstant functions $u_{\ell} \colon M\to\mathds R^+$ for each $\ell\in\{1,\ldots,k\}$,
which factor through the projection $M\to \mathds S^1$, in such way that $(u_{\ell}-1)\colon \mathds S^1\to \mathds R$ has exactly $2\ell$ zeros on $\mathds S^1$, and the conformal metric $u_{\ell}^\frac4{n-1}(g_N\oplus r^2\dd t^2)$ has constant scalar curvature equal to $R_N$.
\end{theorem}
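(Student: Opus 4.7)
The plan is to directly translate the \emph{basic} Yamabe problem on $M=N\times \mathds S^1$ (or $M=M_\phi$) into the periodic boundary value problem $\mathbf P_\mu$ of \Cref{sec:globalbif}, and then invoke \Cref{thm:finalresult} with the appropriate parameter dictionary. The reduction itself has already been carried out in the subsection preceding the statement: positive conformal factors depending only on $t\in\mathds S^1$ and producing constant scalar curvature equal to $R_N$ correspond precisely to positive $2\pi$-periodic solutions to
\[
u''-\tfrac{n-1}{4n}R_N\,r^2\,\bigl(u-u^{(n+3)/(n-1)}\bigr)=0,
\]
i.e., to solutions of $\mathbf P_\mu$ with
\[
q=\tfrac{n+3}{n-1},\qquad T=2\pi,\qquad \mu=\tfrac{n-1}{4n}R_N\,r^2.
\]

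Second, I would compute the degeneracy instants. Plugging the values above into $\mu_k=\frac{4\pi^2k^2}{T^2(q-1)}$ gives $\mu_k=\frac{k^2(n-1)}{4}$, which via the dictionary $\mu=\frac{n-1}{4n}R_N\,r^2$ corresponds exactly to $r_k=k\sqrt{n/R_N}$. In particular, $0<r<\sqrt{n/R_N}$ translates to $0<\mu\le\mu_1=\frac{1}{q-1}$, so \Cref{thm:uniqepsismall}  (equivalently, part (a) of \Cref{thm:finalresult}) guarantees that the only basic solution is $u\equiv1$. For $r>k\sqrt{n/R_N}$, i.e., $\mu>\mu_k$, part (b) of \Cref{thm:finalresult} yields, for each $\ell\in\{1,\ldots,k\}$, a positive $2\pi$-periodic solution $u_\ell$ such that $u_\ell-1$ has exactly $2\ell$ simple zeros in $[0,2\pi)$. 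These solutions are pairwise distinct (and pairwise nonisometric as conformal factors, since the zero counts on $\mathds S^1$ are different), and they are smooth by the regularity remark at the end of \Cref{sec:crash}'s companion section on regularity and positivity.

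Third, I would lift the $u_\ell$ to functions on $M$ via the canonical projection. For $M=N\times \mathds S^1$ this is immediate; for the mapping torus $M_\phi$, one uses that $\phi$ is an isometry of $(N,g_N)$ so the projection $M_\phi\to \mathds S^1$ is well-defined and, being a Riemannian submersion with totally geodesic fibers isometric to $(N,g_N)$, a function constant along fibers satisfies $\Delta_M u=\frac{1}{r^2}u''$ just as in the product case. Hence the conformal metric $u_\ell^{4/(n-1)}\bigl(g_N\oplus r^2\dd t^2\bigr)$ has constant scalar curvature $R_N$ on $M$, and the theorem follows.

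The ``main obstacle'' is really absent at this stage: everything nontrivial is packaged into \Cref{thm:finalresult} and its supporting lemmas (positivity along the branch, a priori bounds via the phase portrait, compactness, transversality of the linearization at each $\mu_k$). The only genuinely substantive point I would want to double-check is the passage from ``distinct as periodic functions'' to ``distinct as metrics on $M$'': since translations of $t\in\mathds S^1$ act by isometries of $(\mathds S^1,r^2\dd t^2)$ and preserve the fibers of $M\to \mathds S^1$, two conformal factors that differ by such a translation yield isometric metrics on $M$; but our $u_\ell$ have different zero counts of $u_\ell-1$ on $\mathds S^1$, so they cannot be related by any translation, and in particular induce nonisometric constant scalar curvature metrics. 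This gives \Cref{mainthmB} as stated.
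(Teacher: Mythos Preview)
Your proposal is correct and follows essentially the same route as the paper: reduce the basic Yamabe problem to $\mathbf P_\mu$ with $q=\tfrac{n+3}{n-1}$, $T=2\pi$, $\mu=\tfrac{n-1}{4n}R_N r^2$, compute that $\mu_k$ corresponds to $r_k=k\sqrt{n/R_N}$, and then invoke \Cref{thm:finalresult} (or equivalently \Cref{thm:arbitrarymu} and \Cref{thm:numberofzerosbranch}) to produce the $u_\ell$. One tiny slip: $0<r<\sqrt{n/R_N}$ gives the \emph{strict} inequality $\mu<\mu_1$, not $\mu\le\mu_1$; but since part~(a) of \Cref{thm:finalresult} covers the closed interval, the conclusion is unaffected.

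The one genuine overreach is your final paragraph. The statement of \Cref{thm:finalresult_yamabe} does \emph{not} assert that the metrics $u_\ell^{4/(n-1)}(g_N\oplus r^2\dd t^2)$ are pairwise nonisometric, and your argument does not establish it either: you correctly observe that the $u_\ell$ are not related by translations of $\mathds S^1$, but the inference ``hence the induced metrics are nonisometric'' is a non sequitur, since an isometry between two such metrics need not arise from a translation (or even preserve the product/fiber structure). The paper treats this question separately in \Cref{thm:remgeomdistinct}, where the nonisometry is proved only for $N=\mathds S^n$ via Liouville's Theorem on conformal maps of $\mathds S^{n+1}\setminus\mathds S^0$. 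For \Cref{mainthmB}, ``distinct'' is understood in the sense fixed in the Introduction (not translations of one another), which your zero-count argument does justify; just drop the stronger ``nonisometric'' claim.
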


\begin{proof}
All claims follow from Theorem~\ref{thm:finalresult}, applied to the Yamabe equation \eqref{eq:Yamabe3} with periodic boundary condition~\eqref{eq:BCYamabe}. 
Namely, $u_\ell$ can be taken so that $(\mu, u_\ell)$ is any point in $\big(\mathcal B_\ell \setminus\mathcal  B_{\mathrm{triv}}\big)\cap \big(\{\mu\}\times C^2_{\mathrm{per}}\big([0,2\pi],\mathds R\big)^\mathrm{even}\big)$. By Proposition~\ref{thm:arbitrarymu}, the intersections above are nonempty whenever $\mu=\frac{n-1}{4n}\,R_N\, r^2>\frac{\ell^2}{q-1}=\frac{n-1}4\ell^2,$ i.e., 
$r>\ell\sqrt{ n/R_N}$. For any $k\in\mathds N$, if $r>k\sqrt{ n /R_N}$, then this is evidently the case for all $\ell\in\{1,\ldots,k\}$.
\end{proof}

\begin{remark}
A weaker (but easier to prove) nonuniqueness result for solutions to the Yamabe problem on manifolds as in Theorem~\ref{thm:finalresult_yamabe} follows from combining 
the existence of \emph{Yamabe metrics} (i.e., solutions to \eqref{eq:Yamabe1} that minimize the corresponding energy functional in its conformal class) with Aubin's inequality for the Yamabe invariant. More precisely, since there exist degree $k$ coverings $\mathds S^1\to\mathds S^1$ for all $k\in\mathds N$, the pullback of a Yamabe metric on $M=N\times \mathds S^1$ via the corresponding coverings of $M$ cannot remain Yamabe for $k$ sufficiently large, since its large volume would contradict Aubin's inequality, see~\cite{LeeParker87}. Iterating this procedure gives arbitrarily many solutions conformal to $g_N\oplus r^2\dd t^2$ provided $r$ is sufficiently large.
This is also the key idea behind more general multiplicity results in \cite{aif}.
\end{remark}

\begin{remark}\label{rem:GidasNiNirenberg}
Using a celebrated result of Gidas, Ni, and Nirenberg~\cite{GidNiNir79}, it is well-known that, if $(N,g)$ is the round sphere $(\mathds S^n,g_\text{round})$, then all the constant scalar curvature metrics on $\mathds S^n\times\mathds S^1$ that belong to the conformal class of the product metric $g_\text{round}\oplus r^2\dd t^2$ are obtained from a \emph{basic} conformal factor. Thus, in this case, Theorem~\ref{thm:finalresult_yamabe} classifies \emph{all} solutions to the Yamabe problem in the conformal class of $g_\text{round}\oplus r^2\dd t^2$. In particular, by Theorem~\ref{thm:uniqepsismall}, if $r$ is sufficiently small, then all constant scalar curvature metrics in the conformal class of $g_\text{round}\oplus r^2\dd t^2$ are homothetic to $g_\text{round}\oplus r^2\dd t^2$.
\end{remark}

\begin{remark}\label{thm:remgeomdistinct}
A different (and interesting) question is to determine whether two distinct solutions to the Yamabe equation produce \emph{nonisometric} constant scalar curvature metrics, see e.g.~\cite[Rem.~4.1]{BetPicSan16} or \cite[Rem.~2.2]{aif}.
For instance, even  though the Yamabe equation on the round sphere $\mathds S^n$ admits a noncompact set of solutions, all such constant scalar curvature metrics are pairwise isometric. 
In particular, it seems reasonable to expect that the metrics $u_{\ell}^\frac4{n-1}(g_N\oplus r^2\dd t^2)$ in \Cref{thm:finalresult_yamabe} are \emph{nonisometric} for \emph{different} values of $\ell$.

This is the case, for instance, if $(N^n,g_N)$ is the round sphere $(\mathds S^n,g_\text{round})$, with $n\geq 2$. In fact, metrics conformal to $g_\text{round}\oplus r^2\dd t^2$ on $\mathds S^n\times\mathds S^1$ lift to metrics conformal to $g_\text{round}\oplus \dd t^2$ on $\mathds S^n\times\mathds R$, and hence conformal to $g_\text{round}$ on the twice punctured sphere $\mathds S^{n+1}\setminus \mathds S^0$. By Liouville's Theorem, all conformal diffeomorphisms of $(\mathds S^{n+1}\setminus \mathds S^0, g_\text{round})$ are M\"obius transformations and hence extend to conformal diffeomorphisms of $(\mathds S^{n+1},g_\text{round})$ that leave $\mathds S^0$ invariant; these correspond exactly to the isometries of $(\mathds S^n\times\mathds R,g_\text{round}\oplus\dd t^2)$, see e.g.~\cite{kulkarni-book}.
Thus, conformal factors $u_{\ell_1}$ and $u_{\ell_1}$ on $\mathds S^n\times\mathds S^1$ that give rise to isometric metrics $g_i=u_{\ell_i}^\frac4{n-1}\big(g_\text{round}\oplus r^2\dd t^2\big)$, $i=1,2$, 
have lifts $\widetilde u_{\ell_i}\colon \mathds S^n\times\mathds R\to\mathds R$ that
satisfy $\widetilde u_{\ell_2}= \widetilde u_{\ell_1}\circ\varphi$, where $\varphi=(\alpha,\beta)\in \mathrm{Iso}(\mathds S^n\times\mathds R,g_\text{round}\oplus\dd t^2)=\mathrm{Iso}(\mathds S^n,g_\text{round})\times \mathrm{Iso}(\mathds R,\dd t^2)$ is the isometry such that $\widetilde g_2=\varphi^*(\widetilde g_1)$. In particular, as $u_{\ell_1}$ and $u_{\ell_2}$ only depend on the variable $t\in\mathds S^1$, they are obtained from one another by composition with an isometry of $(\mathds S^1,r^2\dd t^2)$, so $u_{\ell_1}-1$ and $u_{\ell_2}-1$ must have the same number of zeroes, i.e., $\ell_1=\ell_2$.
\end{remark}
\end{section}


\begin{thebibliography}{BPJBP21}
\bibitem[BPJBP21]{Petean21}
{\sc A.~Betancourt de~la Parra, J.~Julio-Batalla, and J.~Petean}.
\newblock {\em Global bifurcation techniques for {Y}amabe type equations on
  {R}iemannian manifolds}.
\newblock Nonlinear Anal., 202 (2021), 112140, 23.

\bibitem[BP18]{aif}
{\sc R.~G. Bettiol and P.~Piccione}.
\newblock {\em Infinitely many solutions to the {Y}amabe problem on noncompact
  manifolds}.
\newblock Ann. Inst. Fourier (Grenoble), 68 (2018), 589--609.

\bibitem[BP20]{bp-notices}
{\sc R.~G. Bettiol and P.~Piccione}.
\newblock {\em Instability and bifurcation}.
\newblock Notices Amer. Math. Soc., 67 (2020), 1679--1691.

\bibitem[BPS16]{BetPicSan16}
{\sc R.~G. Bettiol, P.~Piccione, and B.~Santoro}.
\newblock {\em Bifurcation of periodic solutions to the singular {Y}amabe
  problem on spheres}.
\newblock J. Differential Geom., 103 (2016), 191--205.

\bibitem[BT03]{BufTol2020}
{\sc B.~Buffoni and J.~Toland}.
\newblock {\em Analytic theory of global bifurcation}.
\newblock Princeton Series in Applied Mathematics. Princeton University Press,
  Princeton, NJ, 2003.

\bibitem[CR71]{crandall-rabinowitz}
{\sc M.~G. Crandall and P.~H. Rabinowitz}.
\newblock {\em Bifurcation from simple eigenvalues}.
\newblock J. Functional Analysis, 8 (1971), 321--340.

\bibitem[GNN79]{GidNiNir79}
{\sc B.~Gidas, W.~M. Ni, and L.~Nirenberg}.
\newblock {\em Symmetry and related properties via the maximum principle}.
\newblock Comm. Math. Phys., 68 (1979), 209--243.

\bibitem[Kie12]{kielhofer}
{\sc H.~Kielh\"{o}fer}.
\newblock {\em Bifurcation theory}, vol.~156 of Applied Mathematical Sciences.
\newblock Springer, New York, second edition, 2012.

\bibitem[Kob85]{Kobayashi85}
{\sc O.~Kobayashi}.
\newblock {\em On large scalar curvature}.
\newblock Research Report KSTS/RR-85/011, Dept. Math., Keio University,
  (1985).

\bibitem[KP88]{kulkarni-book}
{\sc R.~S. Kulkarni and U.~Pinkall}.
\newblock {\em Conformal geometry}.
\newblock Aspects of Mathematics, E12. Friedr. Vieweg \& Sohn, Braunschweig,
  1988.
\newblock Papers from the seminar held in Bonn, 1985--1986.

\bibitem[LP87]{LeeParker87}
{\sc J.~M. Lee and T.~H. Parker}.
\newblock {\em The {Y}amabe problem}.
\newblock Bull. Amer. Math. Soc. (N.S.), 17 (1987), 37--91.

\bibitem[LV98]{LicVer98}
{\sc J.~R. Licois and L.~V\'{e}ron}.
\newblock {\em A class of nonlinear conservative elliptic equations in
  cylinders}.
\newblock Ann. Scuola Norm. Sup. Pisa Cl. Sci. (4), 26 (1998), 249--283.

\bibitem[OP16]{otoba-peteanDGA}
{\sc N.~Otoba and J.~Petean}.
\newblock {\em Metrics of constant scalar curvature on sphere bundles}.
\newblock Differential Geom. Appl., 46 (2016), 146--163.

\bibitem[Pet10]{petean-2010}
{\sc J.~Petean}.
\newblock {\em Metrics of constant scalar curvature conformal to {R}iemannian
  products}.
\newblock Proc. Amer. Math. Soc., 138 (2010), 2897--2905.

\bibitem[Rab71]{rabinowitz}
{\sc P.~H. Rabinowitz}.
\newblock {\em Some global results for nonlinear eigenvalue problems}.
\newblock J. Functional Analysis, 7 (1971), 487--513.

\bibitem[Sch89]{Schoen89}
{\sc R.~M. Schoen}.
\newblock {\em Variational theory for the total scalar curvature functional for
  {R}iemannian metrics and related topics}.
\newblock In {\em Topics in calculus of variations ({M}ontecatini {T}erme,
  1987)}, vol.~1365 of Lecture Notes in Math., 120--154. Springer, Berlin,
  1989.

\end{thebibliography}
\end{document}